\pgfplotsset{compat=newest}
\definecolor{lime}{HTML}{A6CE39}
\DeclareRobustCommand{\orcidicon}{%
	\begin{tikzpicture}
	\draw[lime, fill=lime] (0,0) 
	circle [radius=0.16] 
	node[white] {{\fontfamily{qag}\selectfont \tiny ID}};
	\draw[white, fill=white] (-0.0625,0.095) 
	circle [radius=0.007];
	\end{tikzpicture}
	\hspace{-2mm}
}
\xdef\csname orcid\x\endcsname{\noexpand\href{https://orcid.org/\csname orcidauthor\x\endcsname}{\noexpand\orcidicon}}
\DeclareMathOperator*{\argmin}{arg\,min}
\DeclareMathOperator\supp{supp}
\theoremstyle{plain}
\newtheorem{thm}{Theorem}[section]
\newtheorem{lem}[thm]{Lemma}
\newtheorem{prop}[thm]{Proposition}
\theoremstyle{definition}
\newtheorem{assumption}[thm]{Assumption}
\newtheorem{defn}[thm]{Definition}
\newtheorem{exmp}[thm]{Example}
\theoremstyle{remark}
\newtheorem{rmk}[thm]{Remark}
\newcommand{\dd}{ \mathrm{d}}
\newcommand{\EE}{\mathbb{E}}
\newcommand{\PP}{\mathbb{P}}
\newcommand{\ind}{\mathbf{1}}
\newcommand{\RR}{\mathbb{R}}
\newcommand{\QQ}{\mathbb{Q}}
\newcommand\independent{\protect\mathpalette{\protect\independenT}{\perp}}
\def\independenT#1#2{\mathrel{\rlap{$#1#2$}\mkern2mu{#1#2}}}
\tikzstyle{tikzfig}=[baseline=-0.25em,scale=0.5]
\tikzstyle{none}=[inner sep=0mm]
\newcommand{\tikzfig}[1]{%
{\tikzstyle{every picture}=[tikzfig]
\IfFileExists{#1.tikz}
  {\input{#1.tikz}}
  {%
    \IfFileExists{./figures/#1.tikz}
      {\input{./figures/#1.tikz}}
      {\tikz[baseline=-0.5em]{\node[draw=red,font=\color{red},fill=red!10!white] {\textit{#1}};}}%
  }}%
}
\tikzstyle{every loop}=[]
\definecolor{red}{rgb}{0.6, 0.0, 0.0}
\title{A piecewise deterministic Monte Carlo method for diffusion bridges}
\author[1]{Joris Bierkens\orcidA{}}
\author[1]{Sebastiano Grazzi\thanks{Corresponding author. E-mail address: \href{mailto:s.grazzi@tudelft.nl}{\texttt{s.grazzi@tudelft.nl}}.}\orcidB{}}
\author[1]{\\Frank van der Meulen\orcidC{}}
\author[2]{Moritz Schauer\orcidD{}}
\affil[1]{Delft Institute of Applied Mathematics (DIAM), Delft University of Technology}
\affil[2]{Department of Mathematical Sciences, Chalmers University of Technology and University of Gothenburg}
\date{\today}
\begin{document}
\maketitle
\subsection*{Abstract}
We introduce the use of the Zig-Zag sampler to the problem of sampling conditional diffusion processes (diffusion bridges). The Zig-Zag sampler is a rejection-free sampling scheme based on a non-reversible continuous piecewise deterministic Markov process.
Similar to the L\'evy-Ciesielski construction of a Brownian motion, we expand the diffusion path in a truncated Faber-Schauder basis. The coefficients within the basis are sampled using a Zig-Zag sampler. A key innovation is the use of   the \textit{fully local} algorithm for the Zig-Zag sampler that allows to exploit the sparsity structure implied by the dependency graph of the coefficients and by the \textit{subsampling} technique to reduce the complexity of the algorithm. We illustrate the performance of the proposed methods in a number of examples. 

\noindent \textit{\textbf{Keywords:} diffusion bridge, conditional diffusion, diffusion process, Faber-Schauder basis, intractable target density, local Zig-Zag sampler, piecewise deterministic Monte Carlo, high dimensional simulation}

\section{Introduction}
Diffusion processes are an important  class of continuous time probability models which find applications in many fields such as finance, physics and engineering. They naturally arise by adding Gaussian random perturbations (white noise) to deterministic systems. We consider diffusions   described by a one-dimensional stochastic differential equation of the form
\begin{equation}
\label{sde}
\dd X_t = b(X_t)\dd t + \dd W_t, \quad X_0 = u,    
\end{equation}
where $(W_t)_{t\ge0}$ is a driving scalar Wiener process defined in some probability space and $b$ is the \textit{drift} of the process.  The solution of equation (\ref{sde}), assuming it exists, is an instance of one-dimensional time-homogeneous diffusion. We aim to sample $X$ on $[0,T]$ conditional on $\{X_T=v\}$, also known as  a \textit{diffusion bridge}.

One driving motivation for studying this problem is estimation for discretely observed diffusions. Here, one assumes observations $\mathcal{D}=\{x_{t_1},\ldots, x_{t_N}\}$ at observations times $t_1<\ldots < t_N$ are given and interest lies in estimation of a parameter $\theta$ appearing in the drift $b$. 
It is well known that this problem can be viewed as a missing data problem  as in \textcite{roberts2001inference}, where one iteratively imputes the missing paths conditional on the parameter and the observations, and then the parameter conditional on the ``full'' continuous path. Due to the Markov property, the missing paths in between subsequent observations can be sampled independently and each of such segments constitutes a diffusion bridge. 
As this application requires sampling iteratively many diffusion bridges, it is crucial to have a fast algorithm for this step. We achieve this by adapting the Zig-Zag  sampler for the simulation of diffusion bridges. The Zig-Zag  sampler is an innovative non-reversible and rejection-free Markov process Monte Carlo algorithm which can exploit the structure present in this high-dimensional sampling problem. It is based on simulating a piecewise deterministic Markov process (PDMP). To the best of our knowledge, this is the first application of PDMPs for diffusion bridge simulation. This method also illustrates the use of a local version of the Zig-Zag  sampler in a genuinely high dimensional setting (arguably even an infinite dimensional setting).

The problem of diffusion bridge simulation has received considerable attention over the past two decades, see for example \textcite{bladt2014simple}, \textcite{beskos2006retrospective}, \textcite{vandermeulen2017}, \textcite{mider2019simulating}, \textcite{bierkens2018simulation} and references therein. This far from exhaustive list of references includes methods that apply to a more general setting than considered here, such as multivariate diffusions, conditioning on partial observations and hypo-elliptic diffusions. 
Among the methods that can be applied, most of the methodologies available are of the acceptance-rejection type and scale  poorly with respect to some parameters of the diffusion bridge. For example, if the proposed path is not informed by the target distribution, the probability of accepting the path depends strongly on the discrepancy between the proposed path and the target diffusion bridge measure and usually scales poorly as the time horizon of the diffusion bridge $T$ grows. In contrast, gradient based techniques which compute informed proposals (e.g. Metropolis-adjusted Langevin algorithm), require the evaluation of the gradient of the target distribution, which, in this case, is a path integral that has to be generally computed numerically and its computational cost is of order $T$,  leading to computational limitations. The present work aims to alleviate such restrictions through the use of a rejection-free method and an exact subsampling technique which reduces the cost of  evaluating the gradient. On a more abstract level, our method can be viewed as targeting a probability distribution which is obtained by a push-forward of Wiener measure through a change of measure. It then becomes apparent that the studied problem of diffusion bridge simulation is a nicely formulated non-trivial  example  problem within this setting to study the potential of simulation based on PDMPs. Our results open new paths towards applications of the Zig-Zag for high dimensional problems.

\subsection{Approach} 
In this section we present the main ideas used in this paper.  
\subsubsection{Brownian motion expanded in the Faber-Schauder basis}
\label{sec:faber-schauder}
Our starting point is the L\'evy-Ciesielski construction of Brownian Motion. Define $\Bar{\phi}(t)=\sqrt{t}$, $\phi_{0,0}(t)=\sqrt{T}\left((t/T)\ind_{[0,T/2]}(t) +(1-t/T)\ind_{(1/2,1]}(t)\right)$  and set 
\[ \phi_{i,j}(t) = 2^{-i/2}\phi_{0,0}(2^i t - jT),\quad \text{ for }\quad  i = 0, 1,...,\quad  j= 0, 1,...2^{i}-1.
\] 
If $\Bar \xi$ is standard normal and $\{\xi_{i,j}\}$ is a sequence of independent standard normal random variables (independent of $\Bar \xi$), then
\begin{equation}
    \label{sum expansion}
    X^N(t) = \Bar{\phi}(t)\bar \xi+ \sum_{i=0}^N \sum_{j= 0}^{2^{i}-1}  \xi_{i,j} \phi_{i,j}(t)
\end{equation}
converges almost surely on $[0,T]$ (uniformly in $t$) to a Brownian motion as $N \to \infty$ (see e.g.\  Section 1.2 of \cite{mckean1969stochastic}). The basis formed by $\Bar{\phi}$ and $\{\phi_{i,j}\}$ is known as the Faber-Schauder basis (see Figure~\ref{bm}). The larger $i$, the smaller the support of $\phi_{i,j}$, reflecting that higher order coefficients represent the fine details of the process. A Brownian bridge starting in $u$ and ending in  $v$ can be obtained by fixing $\Bar \xi = v/\sqrt{T}$ and adding the function  $\Bar{\Bar{\phi}}(t)u = (1-t/T)u$ to \eqref{sum expansion}.  By sampling $\xi^N := (\xi_{0,0},\xi_{1,0},...,\xi_{N,2^N-1})$ (which in this case are standard normal),  approximate realisations of a Brownian bridge can be obtained.

\subsubsection{Zig-Zag sampler for diffusion bridges}
Let $\mathbb{Q}^u$ denote the Wiener measure on $C[0,T]$ with initial value $X_0 = u$ (cf.\ section 2.4 of \cite{karatzas1998brownian}) and let $\mathbb{P}^u$ denote the law on $C[0,T]$ of the diffusion in \eqref{sde}.  
 Under mild conditions on $b$, the two measures are absolutely continuous and their Radon-Nikodym derivative $\frac{\dd \mathbb{P}^u}{\dd \mathbb{Q}^u}$ is given by the Girsanov formula.
Denote by  $\mathbb{P}^{u, v_T}$ and $\mathbb{Q}^{u, v_T}$ the measures of the diffusion bridge and the Wiener bridge respectively, both starting at $u$ and conditioned to hit a point $v$ at time $T$. Applying the Bayes' law for conditional expectations (\cite{klebaner2005introduction}, Chapter~10) we obtain: 
\begin{equation}
    \label{bayes rule bridge}\frac{\dd\mathbb{P}^{u, v_T}}{\dd\mathbb{Q}^{u, v_T}}(X) = \frac{ q(0,u, T, v )}{ p(0,u, T, v )} 
     \frac{\dd\mathbb{P}^u}{\dd\mathbb{Q}^u} (X)  ,  
\end{equation}
where $p$ and $q$ are the transition densities of $X$  under $\mathbb{P}, \mathbb{Q}$ respectively so that for $s<t$, $p(s, x, t,  y)\dd y =  P(X_t \in \dd y \mid X_s = x)$. As $p$ is intractable, the Radon-Nikodym derivative for the diffusion bridge is only known up to proportionality constant.  The main idea now consists of rewriting the Radon-Nikodym derivative in \eqref{bayes rule bridge}, evaluating it in $X^N$  and running the Zig-Zag  sampler for $\xi^N$ targeting this density. Technicalities to actually get this to work are detailed in Section \ref{faber schauder expansion of diffusion processes}. A  novelty is the introduction of a {\it local} version of the   Zig-Zag  sampler, analogously to the \textit{local bouncy particle sampler} (\cite{2015arXiv151002451B}). This allows for exploiting the sparsity in the dependence structure of the coefficients of the Faber-Schauder expansion efficiently, resulting in a reduction of  the complexity of the algorithm. The methodology we propose is derived for  one dimensional diffusion processes with unit diffusivity. However, diffusions with state-dependent diffusivity can be transformed to this setting using the Lamperti transform (an example is given  in Subsection~\ref{Diffusion model with unbounded drift}). In Subsection~\ref{subsec: multivariate diffusion bridge} we generalize the method to multivariate diffusion processes  with unit diffusivity, assuming the drift to be  a conservative vector field.

\subsection{Contributions of the paper}
 The Faber-Schauder basis offers a number of attractive properties:
\begin{enumerate}[label=(\alph*)]
    \item The coefficients of a diffusions have a structural conditional independence property (see Section~\ref{A local Zig-Zag algorithm} and Appendix~\ref{App: fact diff bridge}) which can be exploited in numerical algorithms to improve their efficiency.
    \item A diffusion bridge is obtained from the unconditioned process by simply fixing the coefficient $\Bar \xi$.
    \item It will be shown (see for example Figure~\ref{figure qqplots}) that the non-linear component of the diffusion process is typically captured by  coefficients $\xi_{ij}$ in equation \eqref{sum expansion} for which $i$ is small. This allows for a low dimensional representation of the process and yet a good approximation. Therefore, the approximation error caused by  leaving out fine details is equally divided over $[0,T]$, contrary to approaches where a proxy for the diffusion bridge is simulated by Euler discretisation of an SDE governing its dynamics. In the latter case, the discretisation error accumulates over the interval on which the bridge is simulated.   
    \item It is very convenient from a computational point of view as each function is piecewise linear with compact support.
\end{enumerate}

We adopt the Zig-Zag  sampler (\cite{bierkens2019}) which is a sampler based on the theory of piecewise deterministic Markov processes (see \cite{fearnhead2018}, \cite{2015arXiv151002451B}, \cite{andrieu2019peskuntierney}, \cite{andrieu2018hypocoercivity}). The main reasons motivating this choice are:
\begin{enumerate}[label=(\alph*)]
    \item The partial derivatives of the log-likelihood of a diffusion bridge measure usually appear as a path integral that has to be computed numerically (introducing consequently computational burden derived by this step and its bias). The Zig-Zag sampler allows us to replace the gradient of the log-likelihood with an unbiased estimate of it without introducing bias in the target measure. This is done in Subsection \ref{Sampling diffusion bridges}  with the subsampling technique which was presented in \textcite{bierkens2019} for applications for which the evaluation of the log-likelihood is expensive due to the size of the dataset.
    \item In the same spirit as the local Bouncy Particle Sampler of \textcite{2015arXiv151002451B} and \textcite{Peters_2012}, the \textit{local} and the \textit{fully  local} Zig-Zag  sampler introduced in Section~\ref{A local Zig-Zag algorithm} reduces the complexity of the algorithm improving its efficiency with respect to the standard Zig-Zag Algorithm as the dimensionality of the target distribution increases (see Subsection~\ref{subsection: sca;omg for large T, N, d}). This opens the way to high dimensional applications of the Zig-Zag  sampler when the dependency graph of the target distribution is not fully connected and when using subsampling. The factorization of the log-likelihood and the local method we proposed is reminiscent of other work such as e.g. \textcite{Faulkner_2018}, \textcite{Michel_2019} and \textcite{Monmarch__2020}. 
    \item The method is a rejection-free sampler, differing from most of the methodologies available for simulating diffusion bridges.
    \item The Zig-Zag  sampler is defined and implemented in continuous time, eliminating the choice of tuning parameters appearing for example in the proposal density of the Metropolis-Hastings algorithm.
         This advantage comes at the cost of a more complicated method which relies upon bounding from above rates which are model specific and often difficult to derive (see Section \ref{Numerical Results} for our specific applications).
    \item The process is non-reversible: as shown, for example, in \textcite{diaconis2000analysis}, non-reversibility generally enhances the speed of convergence to the invariant measure and mixing properties of the sampler. For an advanced analysis on convergences results for this class of non-reversible processes, we refer to the articles \textcite{andrieu2019peskuntierney} and \textcite{andrieu2018hypocoercivity}.
\end{enumerate}

The local Zig-Zag  sampler relies on the conditional independence structure of the coefficients only. This translates to other settings than diffusion bridge sampling, or other choices of basis functions. For this reason, Section~\ref{A local Zig-Zag algorithm} describes the algorithms of the sampler in their full generality, without referring to our particular application. A documented implementation of the algorithms used in this manuscript can be found in \textcite{mschauer/ZigZagBoomerang.jl}.
 
\subsection{Outline} In Section~\ref{preliminaries} we set some notation and recap the Zig-Zag  sampler. In Section~\ref{faber schauder expansion of diffusion processes} we expand a diffusion process in the Faber-Schauder basis and prove the aforementioned conditional dependence.  The simulation of the coefficients $\xi^N$  presents some challenges as it is high dimensional and its density is expressed by an integral over the path. We give two variants of the Zig-Zag  algorithm which enables sampling in a high dimensional setting.
In particular, in Section~\ref{A local Zig-Zag algorithm} we present the local and fully  local Zig-Zag  algorithms which exploit a factorization of the joint density (Appendix~\ref{App: fact diff bridge}) and a subsampling technique  which, in this setting, is used  to  avoid the evaluation of the path integral appearing in the density (which otherwise would severely complicate the implementation of the sampler).  
In Section~\ref{Numerical Results} we illustrate our methodology using a variety of examples, validate our approach and compare the Zig-Zag  sampler with other benchmark MCMC algorithms. 
We conclude by sketching the extension of our method to multi-dimensional diffusion bridges, carrying out an informal scaling analysis and providing several remarks for future research (Section~\ref{Subsection: Informal scaling analysis} and Section~\ref{Conclusions}).
\section{Preliminaries}\label{preliminaries}
 Throughout, we denote by $\partial_i$ the partial derivative with respect to the coefficient $\xi_i$, the positive part of a function $f$ by $(f)^+$, the $i$th element and the Euclidean norm of a vector $x$ respectively by $[x]_i$ and $\| x\|$. The cardinality of a countable set $A$ is denoted by $|A|$.

\subsection{Notation for the Faber-Schauder basis}
To graphically illustrate the Faber-Schauder basis, a  construction of a Brownian motion with the representation of the basis functions is given in Figure~\ref{bm}. 
The Faber-Schauder  functions are piecewise linear with compact support. The length of the support and the height of the function is determined by the first index  while the second index determines the location. All basis functions with first index $i$ are referred to as \textit{level} $i$ basis functions.  For convenience, we often swap between double and single indexing of Faber-Schauder functions. Denote the double indexing with $(i,j)$ and the single indexing with $n$. We go from one to the other through the transformations
\[
i = \lfloor \log_2(n)\rfloor , \qquad j = n - 2^{i}, \qquad n = 2^{i} + j;
\]
where $\lfloor \cdot \rfloor$ denotes the floor function. The basis with truncation level $N$ has $M:=2^{N+1} - 1$ coefficients. Let $\xi^N$ denote the vector of  coefficients up to level $N$, i.e.\ 
\begin{equation}\label{eq:xiN} 
\xi^N := (\xi_{0,0},\xi_{1,0},...,\xi_{N,2^N-1}) \in \RR^{M}
\end{equation}
and let $X^{\xi^N} := X^N $ when we want to stress the dependencies of $X^N$ on the coefficients $\xi^N$. Using double indexing, we denote by  $S_{i,j} =  \supp \phi_{i,j}$.

\begin{figure}[ht]
    \centering
    \includegraphics[width=1\linewidth]{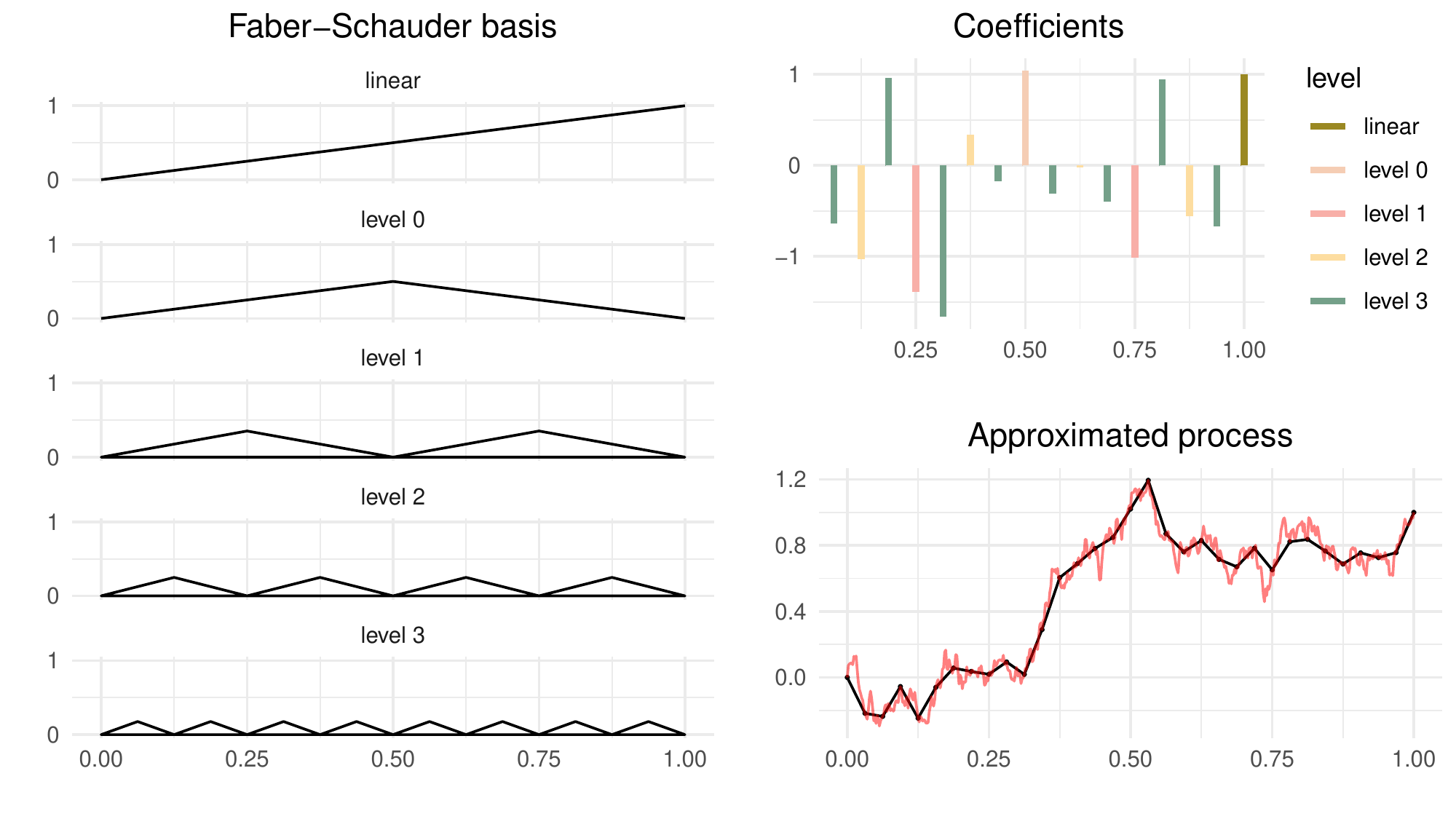}
    \caption{L\'evy-Ciesielski construction of a Brownian motion on $(0,1)$. On the left the Faber-Schauder  basis functions up to level $N =3$, on the top-right the values of the corresponding coefficients located at the peak of their relative FS basis function and on the bottom-right the resulting approximated Brownian path $X^N$ (black line) compared with a finer approximation (red line). The truncated sum defines the process in $2^{N+1} + 1$ finite dyadic points (black dots) with linear interpolation in between points. A finer approximation corresponds to Brownian fill-in noise between any two neighboring dyadic points.}
    \label{bm}
\end{figure}

\subsection{The Zig-Zag sampler}\label{zigzag sampler section}
A \textit{piecewise deterministic Markov process} (\cite{davis1993markov}) is a continuous-time process with behaviour  governed by random jumps at points in time, but deterministic evolution  governed by an ordinary differential equation in between those times (yielding piecewise-continuous realizations). If the  differential equation can be solved in closed form and the random event times can be sampled exactly, then the process can be  simulated in continuous time without introducing any discretization error (up to floating number precision) making it attractive from a computational point of view.

By a careful choice of the event times and deterministic evolution, it is possible to create and simulate an ergodic and non-reversible process with a desired unique invariant distribution (\cite{fearnhead2018}). The Zig-Zag  sampler (\cite{bierkens2019}) is a successful construction of such a processes. We now recap the intuition and the main steps behind the Zig-Zag  sampler.

The {\it one-dimensional} Zig-Zag  sampler is defined in the \textit{augmented space} $ (\xi, \theta) \in \mathbb{R} \times \{+1,-1\}$, where the first coordinate is viewed as the position of a moving particle and  the second coordinate as its velocity. The dynamics of the process $t\mapsto (\xi(t), \theta(t))$ (not to be confused with the time indexing the diffusion process) are as follows: starting from $(\xi(0), \theta(0))$, 
    \begin{enumerate}[label=(\alph*)]
        \item its flow is deterministic and linear in its first component with direction $\theta(0)$   and constant in its second component  until an event at time $\tau$ occurs. That is,  $\, (\xi(t), \theta(t)) = (\xi(0) + t \theta(0), \theta(0)), \, 0\le t\le \tau$.
        \item At an event time $\tau$, the  process changes the sign of its velocity, i.e.\  $(\xi(\tau), \theta(\tau)) = (\xi(\tau-),-\theta(\tau-))$. 
    \end{enumerate}
     The event times are simulated from an inhomogeneous Poisson process with specified rate  $\lambda\colon (\mathbb{R}\times \{1,-1\}) \rightarrow \mathbb{R}^+ $ such that $P(\tau \in [t, t + \epsilon] ) = \lambda(\xi(t),\theta(t)) \epsilon + o(\epsilon)$, $\epsilon \downarrow 0$.
     
     The {\it $d$-dimensional} Zig-Zag  sampler is conceived as the combination of $d$ one-dimensional Zig-Zag  samplers with rates $\lambda_i(\xi,\theta), \, i= 1,...,d$, where the rates create a coupling of the independent coordinate processes. The following result provides a sufficient condition for  the $d$-dimensional Zig-Zag  sampler to have a particular $d$-dimensional target density $\pi$ as invariant distribution.  Assume that the target $d$-dimensional distribution has strictly positive density with respect to the Lebesgue measure i.e.
        \[
        \pi(\dd\xi) \propto \exp(-\psi(\xi))\dd\xi,  \qquad   \xi \in \mathbb{R}^d.
        \]
        Define the \textit{flipping function} as $F_i(\theta) = (\theta_1,...,-\theta_i,...,\theta_d)$, for $\theta \in \{-1, +1\}^d$. For any $i = 1,...,d$ and $(\xi, \theta) \in \mathbb{R}^d \times \{ 1, -1 \}^d$, the Zig-Zag  process with Poisson rates satisfying
        \begin{equation}
            \label{condition stationary}
            \lambda_i(\xi,\theta) - \lambda_i(\xi,F_i(\theta)) = \theta_i \partial_{i} \psi(\xi), 
        \end{equation}
        has $\pi$ as invariant  density. Condition (\ref{condition stationary}) is derived in the supplementary material of \textcite{bierkens2019}.
    Condition (\ref{condition stationary}) is equivalent to
    \begin{equation}
        \label{Poisson rates}
        \lambda_i(\xi,\theta) = (\theta_i  \partial_{i} \psi(\xi))^+ + \gamma_i(\xi)
    \end{equation}
    for some $\gamma_i(\xi)\ge 0$. Throughout, we set $\gamma_i(\xi) = 0$ because generally the algorithm is more efficient for lower Poisson event intensity (see for example \cite{andrieu2019peskuntierney}, Subsection 5.4).    
    
Assume the target density is $\pi(\xi)=c\tilde\pi(\xi)$.     The process targets the specific distribution function through the Poisson rate $\lambda$ which is a function of the gradient of  $\xi\mapsto \psi(\xi) = -\log(\tilde{\pi}(\xi))$, so that any proportionality factor of the density disappears. Throughout we refer to the function $\psi$ as the \textit{energy function}. As opposed to standard Markov chain Monte Carlo methods, the process is not reversible and it is defined in continuous time. 
    
    \begin{exmp} Consider a $d$-dimensional Gaussian random variable with mean $\mu \in \mathbb{R}^d$ and positive definite covariance matrix $\Sigma \in \mathbb{R}^{d\times d} $. Then
    \begin{itemize}
    \item $\pi(\xi) \propto \exp\left(-(\xi - \mu)' \Sigma^{-1}(\xi - \mu)/2\right)$,
    \item $\partial_{k} \psi(\xi) = \left[\Sigma^{-1}(\xi - \mu)\right]_k$,
    \item $\lambda_k(\xi,\theta) = \left(\theta_k [\Sigma^{-1}(\xi - \mu)]_k \right)^+.$
    \end{itemize}
       Notice that if $\Sigma$ is  diagonal, then $\lambda_k(\xi, \theta) = 0$ whenever the process is directed towards the mean so that no jump occurs in the $k$th component when one of the following conditions is satisfied: $(\theta_k = -1, \xi_k-\mu_k \ge 0)$ or $(\theta_k = 1, \xi_k-\mu_k \le 0)$. In Figure~\ref{zigzag gaussian density} we simulate a realization of the Zig-Zag  sampler targeting a univariate standard normal random distribution.    
    \end{exmp}
    \begin{figure}[ht]
        \centering
        \includegraphics[ width=\linewidth]{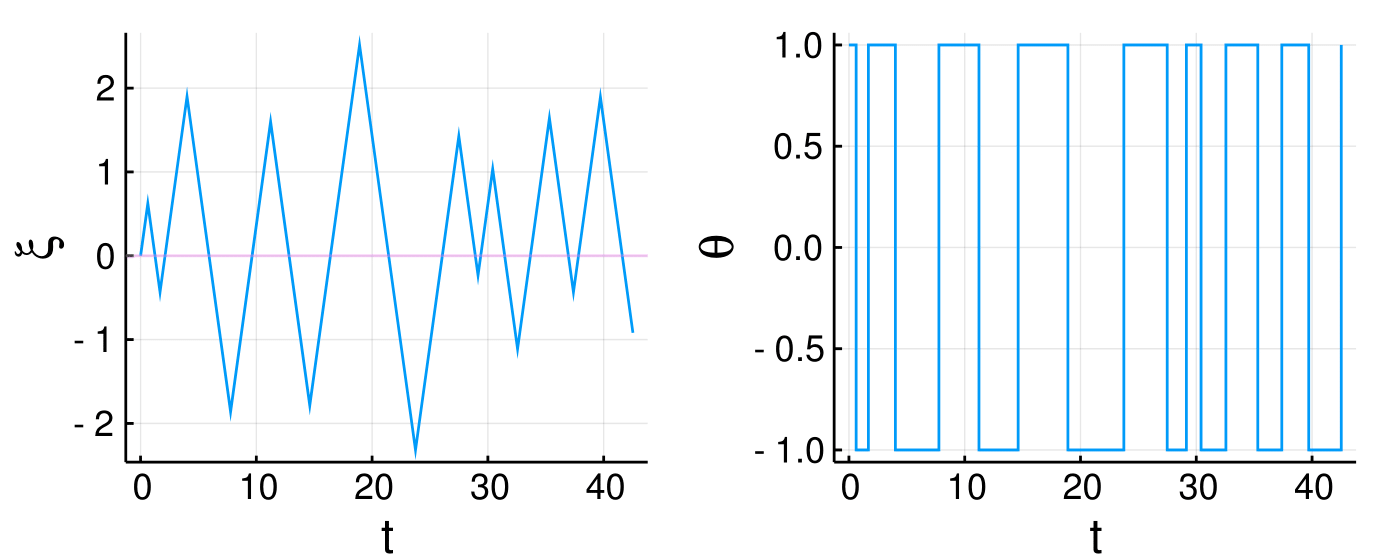}
        \caption{One dimensional Zig-Zag  targeting a Gaussian random variable $\mathcal{N}(0,1)$. Left: $t\mapsto \xi(t)$, right: $t\mapsto \theta(t)$.}
        \label{zigzag gaussian density}
    \end{figure}

Algorithm \ref{ZigZag1} shows the standard implementation of the Zig-Zag  sampler. Given a fixed time $t\ge0$ and a position $(\xi(t), \theta(t))$, the first event time $\tau^*$ after $t$ is determined by taking the minimum of event times $\tau_1, \tau_2,\dots,\tau_d$ simulated according to the Poisson rates $\lambda_i, i = 1,2,...,d$. At event time $\tau^*$, the velocity vector becomes $\theta(\tau^*) = F_{i^*}(\theta(t))$, with $i^* = \argmin(\tau_1,\dots,\tau_d)$.  The algorithm iterates this step moving forward each time until the next simulated event time  exceeds the final clock $\tau_{\text{final}}$.

Although we consider the velocities for each dimension of a $d$-dimensional Zig-Zag  process to be either $1$ or $-1$, these can be taken to be any non-zero values $(\theta_i, -\theta_i)$ for $i= 1,...,d$. A finetuning of $\theta_1,...,\theta_N$ can improve the performance of the sampler. Note that the only challenge in implementing Algorithm \ref{ZigZag1} lies on the simulation of the waiting times which correspond to the simulation of the first event time of $d$ inhomogeneous Poisson processes (IPPs) with rates $\lambda_1, \lambda_2,...,\lambda_d$ which are functions of the state space $(\xi, \theta)$ of the process. Since the flow of the process is linear and deterministic, the Poisson rates are known at each time and are equal to
\[
\lambda_i(t; \xi,\theta) = \lambda_i(\xi + t \theta, \theta), \qquad i = 1,2,...,d.
\]
To lighten the notation, we write $\lambda_i(t) := \lambda_i(t; \xi,\theta)$ when $\xi, \theta$ are fixed. Given an initial position $\xi$ and velocity $\theta$,  the waiting times $\tau_1,...,\tau_d$ are computed by finding the roots for $x$ of the equations 
\begin{equation}
    \label{find the root lambda}
    \int_0^x \lambda_i(s) \dd s + \log(u_i) = 0, \qquad i = 1,2,...,d,
\end{equation}
where $(u_i)_{i = 1,2,...,d}$ are independent realisations from the uniform distribution on $(0,1)$. When it is not possible to find roots of equation~\eqref{find the root lambda} efficiently, for example in closed form, it suffices to find upper bounds for the rate functions for which this is possible; Subsection~\ref{Sampling diffusion bridges} treats this problem for our particular setting. The linear evolution of the process and the jumps of the velocities are always trivially computed and implemented. 

Algorithm \ref{ZigZag1} returns a \textit{skeleton} of values corresponding to the position of the process at the event times. From these values, it is straightforward to reconstruct the continuous path of the Zig-Zag  sampler. Given a sample path of the Zig-Zag  sampler from 0 to $\tau_{\text{final}}$, we can obtain a sample from the target distribution in the following way: \begin{itemize}
    \item  Denote by $\xi(\tau)$ the value of the vector $\xi$ at the Zig-Zag  clock $\tau<\tau_{\text{final}}$. Fixing a sample frequency $\Delta\tau$, we can produce a sample from the density $\pi$ by taking the values of the random vector $\xi$ at time $\tau_{\text{burn-in}} + \Delta\tau, \tau_{\text{burn-in}} + 2\Delta\tau,...., \tau_{\text{final}}$ where $\tau_{\text{burn-in}}$ is the initial burn-in time taken to ensure that the process has reached its stationary regime. Throughout the paper, we create samples using this approach.
\end{itemize}

\begin{algorithm}[ht]
\begin{algorithmic}
\Procedure{ZigZag}{$\tau_{\text{final}}, \xi, \theta$} 
\State Initialise $k = 1 , \,t=0$
\State $\tau_j \sim$ IPP($\lambda_j(\cdot; \xi,\theta)$), \, $j = 1
,...,d$ \Comment{Draw from Inhomogeneous Poisson process (IPP)}
\While{$t\le \tau_{\text{final}}$}
\State $\tau^*, i^* \gets $ findmin($\tau_1,..,\tau_d$)
\State Update: $\xi \gets \xi + \theta(\tau^* - t)$ 
\State Update: $\theta_{i^*} \gets -\theta_{i^*};\quad t  \gets  \tau^*$
\State Save $\xi^{(k)} \gets \xi; \quad t^{(k)} \gets t$
\For{ $j = 1,...,d$}
\State $\tau_j \sim t +  \text{IPP}(\lambda_j(\cdot; \xi,\theta))$
\EndFor
\State $k \gets k + 1$
\EndWhile
\State \textbf{return} Skeletons $(\xi^{(l)}, t^{(l)})_{l=1,...,k-1}$
\EndProcedure
\end{algorithmic}
\caption{Standard $d$-dimensional Zig-Zag  sampler (\cite{bierkens2019})}
\label{ZigZag1}
\end{algorithm}
 
\subsection{Zig-Zag sampler for Brownian bridges}
\label{Zig-Zag sampler for Brownian bridges}
The previous subsections contain all ingredients necessary to run the Zig-Zag  sampler in a finite dimensional projection of the Brownian bridge measure $\mathbb{Q}^{0,v}$ on the interval $[0,T]$. We fix  $\bar\xi$ to  $v$ and run the Zig-Zag  sampler  for $\xi^N$ as defined in \eqref{eq:xiN} targeting a multivariate normal distribution.  
Figure~\ref{fig:brownian bridge} shows $100$ samples obtained from one sample run of the Zig-Zag  sampler where the coefficients are mapped to samples paths using \eqref{sum expansion}. The final clock of the Zig-Zag  is set to $\tau_{\text{final}} = 500$ with initial burning $\tau_{\text{burn-in}} = 10$.

Both Brownian motion and the Brownian bridge are special in that all coefficients in the Faber-Schauder  basis are independent. Of course, these processes can directly be simulated without need of a more advanced method like the Zig-Zag  sampler. However,  for a diffusion process with nonzero drift this property is lost. Nevertheless, we will see that when the process is expanded in the Faber-Schauder  basis, many coefficients  are still {\it conditionally} independent. This implies  that the dependency  graph of the joint density of the coefficients is sparse. We will show in  Section~\ref{A local Zig-Zag algorithm} how this property can be exploited efficiently using the Zig-Zag  sampler in its local version.

\begin{figure}[ht]
    \centering
    \includegraphics[width=1\linewidth]{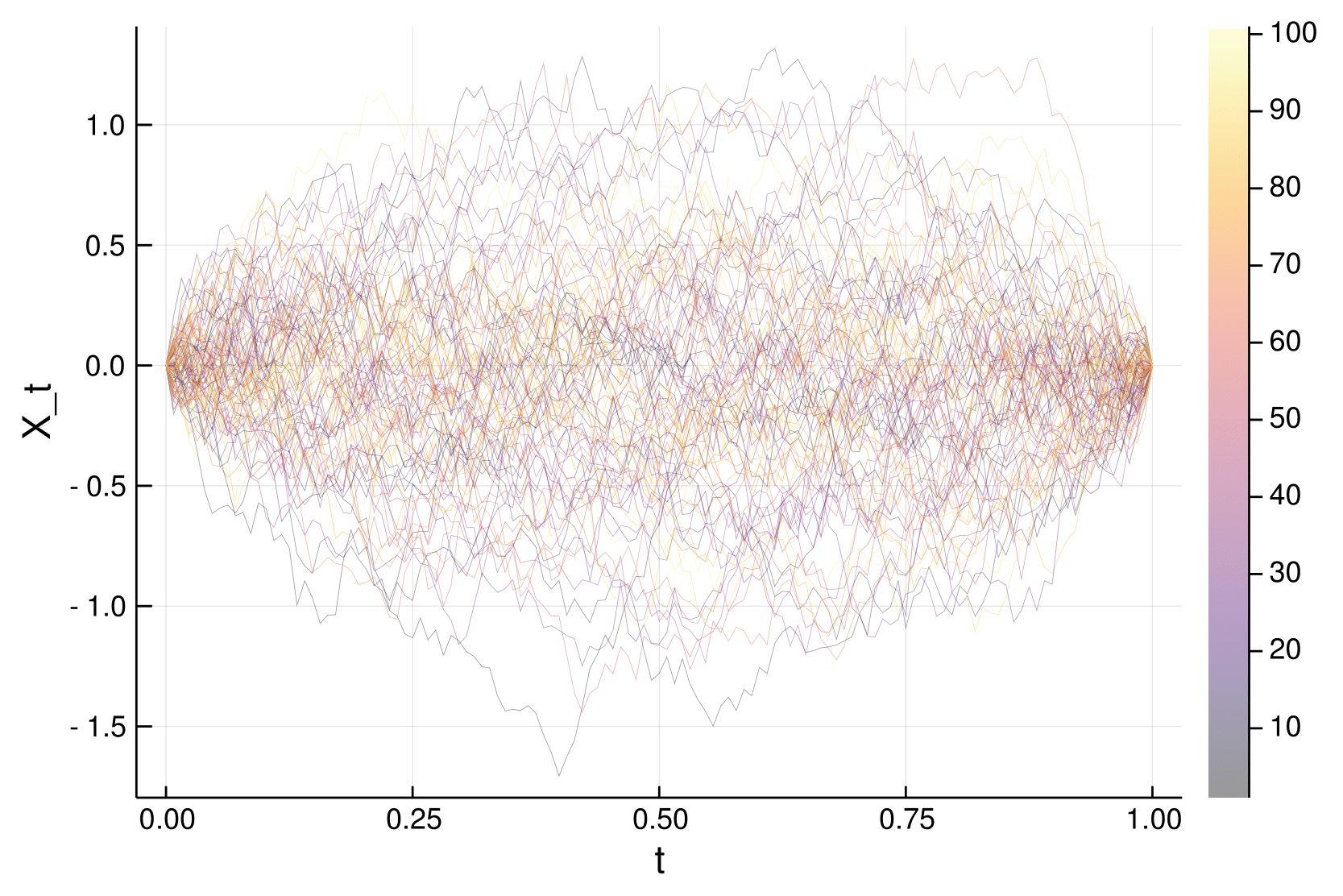}
    \caption{100 samples from the Brownian bridge measure starting at $0$ and hitting $0$ at time $1$ obtained by one run of the Zig-Zag  sampler targeting  the coefficients relative to the measure expanded with the Faber-Schauder  basis. The resolution level is fixed to $N = 6$ and the Zig-Zag  clock to $\tau_{\text{final}} = 500$ and initial burn in $\tau_{\text{burn-in}} = 10$.}
    \label{fig:brownian bridge}
\end{figure}

\section{Faber-Schauder expansion of diffusion processes}\label{faber schauder expansion of diffusion processes}
We extend the results of Section~\ref{preliminaries} to one-dimensional diffusions governed by the SDE in \eqref{sde}. Although the density is defined in infinite dimensional space, in this section we justify both intuitively and formally that the diffusion can be approximated to arbitrary precision by considering a finite dimensional projection of it. 
    
The intuition behind using the Faber-Schauder basis is that, under mild assumptions on the drift function $b$, any diffusion process behaves locally as a Brownian motion. Expanding the diffusion process with the Faber-Schauder functions, this notion translates to the existence of a level $N$ such that the random coefficients at higher levels which are associated to the Faber-Schauder basis are approximately independent standard normal and independent from $\xi^N$ under the measure $\mathbb{P}$. 

Define the function $Z_t\colon \mathbb{R}^+ \times C[0,T] \rightarrow \mathbb{R}^+$ given by
\begin{equation}
    \label{girsanov Z_1}
    Z_t(X) =  \exp\left( \int_0^t b(X_s) \dd X_s - \frac{1}{2}\int_0^t b^2(X_s)  \mathrm{d}s\right)
\end{equation}
where the first integral is understood in the  It\^o sense and  $X\equiv (X_s,\, s \in [0,T])$.
\begin{assumption}\label{A0} 
$Z_t$ is a $\mathbb{Q}$-martingale. 
\end{assumption}
 For sufficient conditions for verifying that this assumption applies, we refer to Remark~\ref{uniform growth}, Remark~\ref{Remark for logstic} and  \textcite{liptser2001statistics}, Chapter 6.
\begin{thm}\label{girsanov theorem}(Girsanov's theorem) If Assumption~\ref{A0} is satisfied, 
\begin{equation} \label{girsanov}
        \frac{\dd \PP^u}{\dd \QQ^u }(X) =  Z_T(X). 
\end{equation}
Moreover, a weak solution of the stochastic differential equation exists which is unique in law. 
\end{thm}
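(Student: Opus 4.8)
The plan is to reduce both assertions to the classical Girsanov theorem for Brownian motion together with well-posedness of the associated one-dimensional martingale problem, treating Assumption~\ref{A0} as the single nontrivial input.

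First I would settle the Radon--Nikodym identity \eqref{girsanov}. Under $\QQ^u$ the canonical process $X$ on $C[0,T]$ is a Brownian motion started at $u$, the integrals defining $Z$ in \eqref{girsanov Z_1} make sense (in particular $\int_0^T b^2(X_s)\,\dd s<\infty$ $\QQ^u$-a.s.), so $N_t := \int_0^t b(X_s)\,\dd X_s$ is a continuous $\QQ^u$-local martingale and $Z_t = \mathcal{E}(N)_t$ is exactly its stochastic exponential, with $Z_0=1$. Assumption~\ref{A0} says this is a true $\QQ^u$-martingale, hence $\EE_{\QQ^u}[Z_T]=1$, and the prescription $\PP^u(A) := \EE_{\QQ^u}[Z_T\ind_A]$ for $A\in\mathcal F_T$ defines a probability measure on $(C[0,T],\mathcal F_T)$ with $\dd\PP^u/\dd\QQ^u = Z_T$ and $\PP^u\ll\QQ^u$; this is \eqref{girsanov} (consistency on the filtration $(\mathcal F_t)_{t\le T}$ follows from $Z$ being a martingale).

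Next I would prove weak existence with law $\PP^u$. Since $\int_0^T b^2(X_s)\,\dd s<\infty$ $\QQ^u$-a.s., the same holds $\PP^u$-a.s.\ by absolute continuity, and the classical Girsanov theorem applied to the change of measure $\dd\PP^u = Z_T\,\dd\QQ^u$ yields that $\tilde W_t := X_t - u - \int_0^t b(X_s)\,\dd s$ is a standard Brownian motion under $\PP^u$ with respect to the $\PP^u$-completed canonical filtration. Hence $(C[0,T],(\mathcal F_t)_t,\PP^u,X,\tilde W)$ is a weak solution of \eqref{sde}, so a weak solution exists and its law on path space is $\PP^u$.

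Finally, uniqueness in law. Let $\PP'$ be the law on $C[0,T]$ of the state component of an arbitrary weak solution $(\Omega,(\mathcal G_t),\mathbb Q',X',W')$ with $X'_0=u$. Then $\PP'$ solves the martingale problem for $\mathcal A = \tfrac12\partial_{xx} + b(x)\partial_x$ started at $u$; since the diffusion coefficient is the (uniformly elliptic, bounded) constant $1$ and $b$ is locally bounded, this martingale problem is well-posed on $[0,T]$, so $\PP'=\PP^u$. Equivalently --- the route I would actually write out --- one shows the reverse exponential $L_t := \exp\bigl(-\int_0^t b(X'_s)\,\dd W'_s - \tfrac12\int_0^t b^2(X'_s)\,\dd s\bigr)$ is a genuine $\mathbb Q'$-martingale by localizing at $S_n := \inf\{t:\int_0^t b^2(X'_s)\,\dd s\ge n\}$, applying Novikov's criterion on each stopped piece and using $\int_0^T b^2(X'_s)\,\dd s<\infty$ a.s.; changing measure by $L_T$ turns $X'-u$ into a Brownian motion, and since $L_T^{-1}=Z_T(X')$ this forces $\PP' = Z_T\,\QQ^u = \PP^u$. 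I expect the only real obstacle to be precisely this last step: Assumption~\ref{A0} controls the exponential only under $\QQ$, so the true-martingale property of $L$ along a general weak solution (equivalently, well-posedness of the martingale problem for the admissible drifts) must be supplied separately; in the regular-drift regime relevant to this paper it is standard, and for a fully general treatment I would defer to \textcite{liptser2001statistics}, Chapter~7, and \textcite{karatzas1998brownian}, Chapter~5.
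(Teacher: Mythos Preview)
Your proposal is correct and in fact considerably more detailed than the paper's own proof, which consists of a single sentence deferring to \textcite{liptser2001statistics}, Section~6, as a standard result. Your sketch is precisely the content of that reference: define $\PP^u$ via the exponential martingale, use Girsanov to produce a weak solution, and obtain uniqueness in law from well-posedness of the associated martingale problem (or equivalently the reverse change of measure). You also correctly flag the one genuinely delicate point---that Assumption~\ref{A0} controls the exponential only under $\QQ^u$, so the reverse-martingale step along an arbitrary weak solution needs separate justification---and appropriately defer to the same references the paper relies on.
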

\begin{proof}
This is a standard result in stochastic calculus (see \cite{liptser2001statistics}, Section 6). 
\end{proof}
As we consider diffusions on $[0,T]$ with $T$ fixed, we denote $Z(X) := Z_T(X)$. Due to the appearance of the stochastic It\^o integral in $Z(X)$, we cannot substitute  for $X$ its truncated expansion in the Faber-Schauder basis. Clearly, whereas the approximation has finite quadratic variation, $X$ has not.
Assuming that $b$ is differentiable and applying  It\^o's lemma to the function $B(x) = \int_0^x b(s) \dd s$, the stochastic integral can be replaced and equation (\ref{girsanov Z_1}) is rewritten as
\begin{equation}
    \label{girsanov Z_2}
    Z(X) =  \exp \left( B(X_T) - B(X_0) - \frac{1}{2}\int_0^T \left( b^2(X_s) + b'(X_s)\right) \mathrm{d}s \right),
\end{equation}
 where $b'$ is the derivative of $b$.

\begin{defn}
Let $X$ be a diffusion governed by \eqref{sde}. Let $X^N$ be the process derived from $X$ by  setting to zero all coefficients of level exceeding $N$ in its Faber-Schauder expansion (see equation (\ref{sum expansion})). Set 
\[
Z^N(X) = \exp \left( B\left(X^N_T\right) - B\left(X^N_0\right) - \frac{1}{2}\int_0^T \left[ b^2\left(X^N_s\right) + b'\left(X^N_s\right)\right] \dd s \right).
\]
 We define the approximating measure $\mathbb{P}_N$ by the change of measure 
\begin{equation}
    \label{approximated stochastic process}
    \frac{\dd \mathbb{P}^u_N}{\dd \mathbb{Q}^u}(X) = \frac{Z^N(X)}{c_N},
\end{equation}
where 
 $c_N = \mathbb{E}_\mathbb{Q}\left(Z^N(X)\right)$.  
\end{defn}

Note that the measure $\mathbb{P}^u_N$ associated to the approximated stochastic process is still on an infinite dimensional space and such that the joint measure of random coefficients $\xi^N$ is different from the one under $\mathbb{Q}^u$ while the remaining coefficients stay independent standard normal and independent from  $\xi^N$. This is equivalent to approximating the diffusion process at finite dyadic points with Brownian noise fill-in in between every two points.  We now fix the final point $v_T$ by setting $\bar \xi = v_T$. Define the \textit{approximated stochastic bridge} with measure $\mathbb{P}^{u, v_T}_N$ in an analogous way of equation \eqref{approximated stochastic process}, so that  
\begin{equation}
    \label{approx diffusion bridge }
    \frac{\dd \mathbb{P}^{u, v_T}_N}{\dd \mathbb{Q}^{u, v_T}}(X) = \frac{Z^N(X)}{c^{v_T}_N}.
\end{equation}
where ${c^{v_T}_N} = \mathbb{E}_{\QQ^{u, v_T}}\left(Z^N(X)\right)$.
The following is the main assumption made.
\begin{assumption}\label{A1}
 The drift $b$ is continuously differentiable and $b^2 + b'$ is bounded from below.
\end{assumption}
\begin{thm}\label{weak convergence for bridges}
If Assumptions \ref{A0} and \ref{A1} are satisfied, then $\mathbb{P}^{u, v_T}_N$ converges weakly to $\mathbb{P}^{u, v_T}$ as $N \to \infty$.
\end{thm}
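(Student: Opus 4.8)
The plan is to exploit that $\mathbb{P}^{u,v_T}_N$, $\mathbb{P}^{u,v_T}$ and $\mathbb{Q}^{u,v_T}$ all live on the Polish space $C[0,T]$ and are mutually absolutely continuous, so that it suffices to prove
\[
\mathbb{E}_{\mathbb{P}^{u,v_T}_N}[F(X)]=\frac{\mathbb{E}_{\mathbb{Q}^{u,v_T}}[F(X)\,Z^N(X)]}{c^{v_T}_N}\;\longrightarrow\;\frac{\mathbb{E}_{\mathbb{Q}^{u,v_T}}[F(X)\,Z(X)]}{c^{v_T}}=\mathbb{E}_{\mathbb{P}^{u,v_T}}[F(X)]
\]
for every bounded continuous $F\colon C[0,T]\to\mathbb{R}$; here the last equality comes from combining the Bayes formula \eqref{bayes rule bridge} with Girsanov's theorem (Theorem~\ref{girsanov theorem}) and using that $\mathbb{P}^{u,v_T}$ is a probability measure, which forces $\tfrac{\dd\mathbb{P}^{u,v_T}}{\dd\mathbb{Q}^{u,v_T}}(X)=Z(X)/c^{v_T}$ with $c^{v_T}=\mathbb{E}_{\mathbb{Q}^{u,v_T}}[Z(X)]$. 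So the whole proof would reduce to (i) establishing the $\mathbb{Q}^{u,v_T}$-almost sure limit $Z^N(X)\to Z(X)$ and (ii) dominating $\{Z^N(X)\}_N$ well enough to interchange limit and integral (the special case $F\equiv 1$ then yielding $c^{v_T}_N\to c^{v_T}$).

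For the domination in (ii) I would use the following observation: under the bridge measure the endpoints of $X^N$ coincide with the (deterministic) endpoints of $X$, so that $B(X^N_T)-B(X^N_0)=B(X_T)-B(X_0)$ is a \emph{constant}, independent of the path and of $N$; and by Assumption~\ref{A1} there is $C\ge0$ with $b^2+b'\ge -C$, so $-\tfrac12\int_0^T[b^2(X^N_s)+b'(X^N_s)]\,\dd s\le CT/2$. Hence
\[
0<Z^N(X)\le\exp\!\big(B(X_T)-B(X_0)+CT/2\big)=:M ,
\]
uniformly in $N$ and in the path, and the same bound holds for $Z(X)$; in particular $\{Z^N(X)\}_N$ is dominated by the constant $M$ and $c^{v_T},c^{v_T}_N\in(0,M]$, so $c^{v_T}>0$ and the division above is legitimate.

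For (i) I would invoke the L\'evy--Ciesielski theorem (Section~\ref{sec:faber-schauder}): since $\mathbb{Q}^{u,v_T}$ is just the Wiener measure with $\bar\xi$ held fixed, $X^N\to X$ $\mathbb{Q}^{u,v_T}$-almost surely and uniformly on $[0,T]$, so along $\mathbb{Q}^{u,v_T}$-almost every path the whole sequence $(X^N)_N$ stays inside a common compact interval $[-R,R]$. As $b\in C^1$, the function $b^2+b'$ is continuous, hence bounded on $[-R,R]$, and the bounded convergence theorem applied pathwise gives $\int_0^T[b^2(X^N_s)+b'(X^N_s)]\,\dd s\to\int_0^T[b^2(X_s)+b'(X_s)]\,\dd s$; combined with the constancy of $B(X^N_T)-B(X^N_0)$ this gives $Z^N(X)\to Z(X)$ a.s. Then for bounded continuous $F$ we have $F(X)Z^N(X)\to F(X)Z(X)$ a.s.\ with $|F(X)Z^N(X)|\le\|F\|_\infty M$, so bounded convergence yields $\mathbb{E}_{\mathbb{Q}^{u,v_T}}[F(X)Z^N(X)]\to\mathbb{E}_{\mathbb{Q}^{u,v_T}}[F(X)Z(X)]$; taking $F\equiv1$ gives $c^{v_T}_N\to c^{v_T}$, and dividing finishes the argument.

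I expect the only genuinely delicate step to be the pathwise convergence of the integral in (i): $b^2+b'$ is assumed bounded only from below, so there is no global dominating function, and one must use that the convergence $X^N\to X$ is \emph{uniform} (not merely pointwise) in order to trap all the $X^N$ inside a common compact set along almost every path. Everything else is routine bookkeeping with the deterministic bound $M$ and the bounded/dominated convergence theorem.
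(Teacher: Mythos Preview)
Your proposal is correct and follows essentially the same approach as the paper: both arguments reduce weak convergence to $L^1(\mathbb{Q}^{u,v_T})$ convergence of the densities $Z^N(X)\to Z(X)$, obtain the almost sure limit from the uniform L\'evy--Ciesielski convergence $X^N\to X$ together with continuity of $b,b'$, and dominate using the uniform bound $Z^N(X)\le \exp(B(v_T)-B(u)+CT/2)$ coming from Assumption~\ref{A1} and the constancy of the endpoints under the bridge measure. Your pathwise justification (trapping $\{X^N\}$ in a compact interval to apply bounded convergence to the time integral) is slightly more explicit than the paper's appeal to the continuous mapping theorem, but the substance is identical.
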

\begin{proof}
 In the following we lighten the notation by omitting the initial point $u$ from the notation, which will be assumed fixed to $u = x_0$. We wish to show that $\PP^{v_T}_N$ converges weakly to $\PP^{v_T}$ as $N \to \infty$. This is equivalent to showing that $\int f \dd \PP^{v_T}_N \to \int f \dd \PP^{v_T}$ for all bounded and continuous functions $f$. Write $c^{v_T}_\infty = p(0,x_0,T, v_T)/q(0,x_0,T, v_T)$. By equation~\eqref{bayes rule bridge} and~\eqref{girsanov}, 
 \[ \EE_{\QQ^{v_T}} Z(X) = \EE_{\QQ^{v_T}} \frac{d \PP^{x_0}}{d \QQ^{x_0}} = c_{\infty}^{v_T} \EE_{\QQ^{v_T}} \left[ \frac{ d \PP^{v_T}}{d \QQ^{v_T}}\right] = c_{\infty}^{v_T}\] and  we have that
\begin{align}\label{eq:boundZ}
& 	\left| \int f \dd \PP^{v_T}_N - \int f \dd \PP^{v_T}\right| \nonumber \\
& = \left| \int f \left( \frac{Z^N}{c^{v_T}_N} - \frac{Z}{c^{v_T}_\infty} \right) \dd \QQ^{v_T} \right|\nonumber \\
	& \le \|f\|_\infty \int \left| \frac{Z^N(X)}{c^{v_T}_N} - \frac{Z(X)}{c^{v_T}_\infty}\right| \dd \QQ^{v_T}(X)\nonumber \\
	& \leq   \|f\|_\infty \left( \frac 1 {c^{v_T}_N} \int  \left|Z^N(X)-Z(X)\right|\dd \QQ^{v_T}(X) + \int Z(X)  \left|\frac {1}{c^{v_T}_N} - \frac {1}{c^{v_T}_{\infty}}  \right|   \dd \QQ^{v_T}(X) \right) \nonumber \\
	& \le \|f\|_\infty \left(\frac{1}{c^{v_T}_N} \int \left|Z^N(X)-Z(X)\right|\dd \QQ^{v_T}(X) + \left| \frac{c^{v_T}_\infty}{c^{v_T}_N}-1 \right|\right) 
\end{align}
where we used Assumption~\ref{A0} for applying the change of measure between the conditional measures.  
Notice that $Z^N(X) = Z(X^N)$. The mapping $X \mapsto Z(X)$, as a function acting on $C(0,T)$ with uniform norm, is continuous, since $B$, $b$, and $b'$ are continuous. 
Therefore, it follows from the L\'evy-Ciesielski construction of Brownian motion (see Section~\ref{sec:faber-schauder}) and the continuous mapping theorem that
\[Z^N(X) \rightarrow Z(X) \qquad  \mathbb{Q}^{v_T}-a.s.
\]
Now notice that, under conditional measures $\QQ^{v_T}$ and $\PP^{v_T}$, the term $B(X_T) - B(X_0)$ is fixed. By the assumptions on $b$ and $b'$, $Z$ is a bounded function and by dominated convergence we get that 
\[
\lim_{N \rightarrow \infty} \EE_\QQ^{v_T} |Z^N(X)-Z(X)| = 0
\]
giving convergence to zero of the first term in~\eqref{eq:boundZ}. This implies that also the constant $c_N := \EE_\QQ^{v_T} |Z^N(X)| $ converges to $\EE_\QQ^{v_T} |Z(X)| = c^{v_T}_\infty $  so that all the terms in (\ref{eq:boundZ}) converge to 0.  
\end{proof}

We now list some technical conditions for the process to satisfy Assumptions \ref{A0} and \ref{A1}.  

\begin{rmk}\label{uniform growth}
If $|b(x)| \le c(1 + |x|)$, for some positive constant $c$, then Assumption~\ref{A0} is satisfied. 
\end{rmk}
\begin{proof}
See \textcite{liptser2001statistics}, Section 6, Example 3 (b).
\end{proof}
\begin{rmk}\label{remark gloabl lipschitz}
If $b$ is globally Lipschitz and continuously differentiable, then Assumptions~\ref{A0} and \ref{A1} are satisfied.
\end{rmk}
\begin{proof}
Assumption~\ref{A1} is trivially satisfied. By Remark \ref{uniform growth}, also Assumption~\ref{A0} is satisfied. 
\end{proof}
In Subsection~\ref{Diffusion model with unbounded drift} we will present an example where the drift $b$ is not globally Lipschitz, yet Assumption~\ref{A1} is satisfied. 

\begin{assumption}\label{Assumptions on B}
 There exists a non-decreasing function $h :[0,\infty) \rightarrow [0,\infty)$ such that ${B(x) \leq h(|x|)}$ and
\[ \int_0^{\infty} \exp(h(x) - x^2/(2T)) \, d x < \infty. \]
\end{assumption}
The above integrability condition is for example satisfied if $h(|x|) = c(1 + |x|)$ for some $c > 0$.
\begin{rmk}\label{Remark for logstic}
If Assumptions \ref{A1} and \ref{Assumptions on B} hold, then Assumption~\ref{A0} is satisfied. 
\end{rmk}

\begin{proof}
By Subsection 3.5 in \textcite{karatzas1998brownian}, $(Z_t)$ is a local martingale. Say $b'(x) + b^2(x) \geq -2  C$, where $C \geq 0$. Using the assumptions, we have
\[ Z_t = \exp \left( B(X_t) - B(X_0) - \tfrac 1 2 \int_0^t \{ b'(X_s) + b^2(X_s) \} \, ds \right) \leq A\exp(C t) \exp(h(|X_t|)),\]
with constant $A = \exp(-B(X_0))$. Then
\[ \sup_{t \in [0,T]} Z_t \leq A\sup_{t \in [0,T]} \exp(C t) \exp(h(|X_t|)) \leq A\exp(C T) \exp\left(h \left(\max_{t \in [0,T]}| X_t|\right)\right).\]
By Lemma~\ref{lemma integrability}, below 
\[ \mathbb E \sup_{t \in [0,T]} Z_t \leq A \exp(C T)\, \mathbb E \exp(h (\max_{t \in [0,T]}| X_t|)) < \infty.\]
Then for a sequence of stopping times $(\tau_k)$ diverging to infinity such that $(Z_t^{\tau_k})_{0 \leq t \leq T}$ is a martingale for all $k$, we have
\[ \EE Z_0 = \EE Z^{\tau_k}_0  = \EE Z^{\tau_k}_t \rightarrow \EE Z_t \]
as $k \rightarrow \infty$ by dominated convergence.
\end{proof}

\begin{lem}\label{lemma integrability}
Suppose $h\colon [0,\infty) \rightarrow [0,\infty)$ is non-decreasing. Let $N_T = \max_{0 \leq t \leq T} |X_t|$ where $(X_t)$ is a Brownian motion. Then
\[ \mathbb E \exp h(N_T) \leq 4 \int_0^{\infty} \frac 1 {\sqrt{2 \pi T}} \exp(h(x) - x^2/(2T)) \, d x.\]
\end{lem}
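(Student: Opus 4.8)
The plan is to reduce the running maximum $N_T=\max_{0\le t\le T}|X_t|$ to the time‑$T$ marginal $|X_T|$ via the reflection principle, and then transport the resulting tail estimate through the non‑decreasing map $x\mapsto e^{h(x)}$ by a layer‑cake argument. Throughout we may assume the Brownian motion starts at $0$, and if the integral on the right‑hand side of the claimed bound diverges the statement is vacuous, so we may treat all quantities as extended reals.

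\medskip
\noindent\textbf{Step 1 (reflection).} Writing $M^{+}=\max_{0\le t\le T}X_t$ and $M^{-}=\max_{0\le t\le T}(-X_t)$, we have $N_T=\max(M^{+},M^{-})$, hence by a union bound, symmetry, and the classical identity $\PP(M^{+}\ge x)=2\PP(X_T\ge x)$ for $x\ge0$,
\[
\PP(N_T\ge x)\le \PP(M^{+}\ge x)+\PP(M^{-}\ge x)=2\PP(M^{+}\ge x)=4\PP(X_T\ge x)=2\PP(|X_T|\ge x),\qquad x\ge0.
\]

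\medskip
\noindent\textbf{Step 2 (layer cake).} Put $g:=e^{h}$, which is non‑decreasing and satisfies $g\ge1>0$ since $h\ge0$. For any non‑negative random variable $Y$ and any non‑decreasing $g$ one has the Lebesgue--Stieltjes identity
\[
\EE\,g(Y)=g(0)+\int_{(0,\infty)}\PP(Y\ge t)\,\mu_g(\dd t),
\]
where $\mu_g$ is the Stieltjes measure of $g$ (equivalently, write $g$ as an increasing limit of non‑negative combinations of indicators $t\mapsto\ind\{t\ge s\}$ and use monotone convergence). Applying this to $Y=N_T$ and to $Y=|X_T|$ and inserting the tail bound from Step 1 yields
\[
\EE\,e^{h(N_T)}\le g(0)+2\int_{(0,\infty)}\PP(|X_T|\ge t)\,\mu_g(\dd t)=2\,\EE\,e^{h(|X_T|)}-g(0)\le 2\,\EE\,e^{h(|X_T|)}.
\]
Finally, since $X_T\sim\mathcal N(0,T)$, folding the Gaussian density gives $2\,\EE\,e^{h(|X_T|)}=4\int_0^{\infty}(2\pi T)^{-1/2}\exp\!\big(h(x)-x^2/(2T)\big)\,\dd x$, which is exactly the asserted bound.

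\medskip
The only delicate point is Step 2: because $h$ is assumed merely non‑decreasing, $g=e^{h}$ need not be continuous or absolutely continuous, so one cannot naively integrate by parts using $g'$; the comparison must be phrased through Stieltjes measures (or monotone approximation by indicator functions), which is precisely what makes the crude tail bound of Step 1 usable. The reflection‑principle estimate and the closing Gaussian computation are entirely standard.
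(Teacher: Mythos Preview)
Your proof is correct and follows essentially the same route as the paper: both use the union bound $\PP(N_T\ge y)\le 2\,\PP(M_T\ge y)$ together with the reflection-principle identity $M_T\stackrel{d}{=}|X_T|$, and then pass to the expectation of $e^{h(\cdot)}$. The paper simply writes ``from which the result follows'' at that last step, whereas you spell out the layer-cake/Stieltjes argument needed when $h$ is merely non-decreasing; your version is a strictly more detailed rendering of the same idea.
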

\begin{proof}
 The maximum $M_T = \max_{0 \leq t \leq T} X_t$ of a Brownian motion is distributed as the absolute value of a Brownian motion and thus has density function $\frac 2 {\sqrt{2 \pi T}} \exp(-x^2/(2T))$, see \textcite{karatzas1998brownian}, Subsection 2.8.
 We have
 $\mathbb P(N_T \geq y) \leq 2 \mathbb P(M_T \geq y)$ from which the result follows.
\end{proof}
Finally we mention that Theorem~\ref{weak convergence for bridges} can be generalized in the following
way to diffusions without a fixed end point.
\begin{prop}\label{main theorem}
If Assumption~\ref{A1} is satisfied and $B$ is bounded, then $\mathbb{P}_N$ converges weakly to $\mathbb{P}$.
\end{prop}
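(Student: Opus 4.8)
The plan is to follow the proof of Theorem~\ref{weak convergence for bridges} almost verbatim; the single change is that the role played there by the fact that the boundary increment $B(X_T)-B(X_0)$ is constant under the conditional measures is now played by the hypothesis that $B$ itself is bounded. Before starting, I would check that the hypotheses already force Assumption~\ref{A0}, so that $\mathbb{P}$ is well defined and Girsanov's theorem (Theorem~\ref{girsanov theorem}) applies: if $B$ is bounded one may take the constant function $h \equiv \sup_x B(x)$ in Assumption~\ref{Assumptions on B}, so that $\int_0^\infty \exp(h(x)-x^2/(2T))\,\dd x < \infty$, and since Assumption~\ref{A1} holds, Remark~\ref{Remark for logstic} shows that $Z_t$ is a $\mathbb{Q}$-martingale. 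In particular $\dd\mathbb{P}^u/\dd\mathbb{Q}^u = Z$ and, because $Z_0 = 1$, the normalising constant is $c_\infty := \mathbb{E}_{\mathbb{Q}}(Z(X)) = \mathbb{E}_{\mathbb{Q}}(Z_0) = 1$.

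Next, as in Theorem~\ref{weak convergence for bridges}, weak convergence of $\mathbb{P}_N$ to $\mathbb{P}$ is equivalent to $\int f\,\dd\mathbb{P}_N \to \int f\,\dd\mathbb{P}$ for all bounded continuous $f$, and using $\dd\mathbb{P}_N/\dd\mathbb{Q} = Z^N/c_N$, $\dd\mathbb{P}/\dd\mathbb{Q} = Z/c_\infty$ together with the triangle inequality one obtains
\[
\left| \int f\,\dd\mathbb{P}_N - \int f\,\dd\mathbb{P} \right| \le \|f\|_\infty \left( \frac{1}{c_N} \int |Z^N(X) - Z(X)|\,\dd\mathbb{Q}(X) + \left| \frac{c_\infty}{c_N} - 1 \right| \right).
\]
Hence it suffices to prove $\int |Z^N - Z|\,\dd\mathbb{Q} \to 0$; this also yields $c_N \to c_\infty$, since $|c_N - c_\infty| \le \int |Z^N - Z|\,\dd\mathbb{Q}$, and $1/c_N$ stays bounded along the sequence because $c_N > 0$ for every $N$ (as $Z^N > 0$) and $c_N \to c_\infty = 1 > 0$.

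For the key limit I would invoke dominated convergence exactly as before. Pointwise, $Z^N(X) = Z(X^N)$ with $X^N \to X$ uniformly on $[0,T]$ $\mathbb{Q}$-a.s. by the L\'evy--Ciesielski construction, and $X \mapsto Z(X)$ is continuous on $C[0,T]$ with the uniform norm (because $B$, $b$, $b'$ are continuous), so the continuous mapping theorem gives $Z^N(X) \to Z(X)$ $\mathbb{Q}$-a.s. For the dominating function, the formula for $Z^N$ combined with boundedness of $B$ gives $|B(X^N_T) - B(X^N_0)| \le 2\|B\|_\infty$, while Assumption~\ref{A1} yields $b^2 + b' \ge -2C$ for some $C \ge 0$, hence $-\tfrac{1}{2}\int_0^T (b^2 + b')(X^N_s)\,\dd s \le CT$; therefore $0 < Z^N(X) \le \exp(2\|B\|_\infty + CT)$, a finite constant independent of $N$ and of $X$. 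Dominated convergence then finishes the argument.

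The main obstacle is essentially absent: once Assumption~\ref{A0} has been extracted from the hypotheses, the whole proof reduces to producing the deterministic, $N$-independent upper bound on $Z^N$, and this is exactly where boundedness of $B$ (substituting for the fixed boundary increment used in the conditional case) is needed. The only point deserving a moment's care is that the normalising constants $c_N$ do not degenerate, which is immediate from $c_N \to 1$.
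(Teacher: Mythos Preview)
Your proposal is correct and follows essentially the same approach as the paper's own proof, which is explicitly presented as a sketch pointing back to Theorem~\ref{weak convergence for bridges}: verify Assumption~\ref{A0} via boundedness of $B$ and Remark~\ref{Remark for logstic}, then replace the fixed boundary increment argument by the uniform bound $Z^N(X)\le \exp(2\|B\|_\infty + CT)$ to enable dominated convergence. Your write-up is in fact more detailed than the paper's sketch (you make the bound on $Z^N$ explicit and handle the normalising constants $c_N\to c_\infty=1$ carefully), but the route is identical.
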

\noindent The proof follows the same steps of the one of Theorem~\ref{weak convergence for bridges}. In this case we need to pay attention on $B$, as for unconditioned process, the final point is not fixed. If $B$ is bounded, then Assumption  \ref{Assumptions on B} is satisfied. By Remark \ref{Remark for logstic} also Assumption~\ref{A0} is satisfied so that we can apply Theorem~\ref{girsanov theorem} for the change of measure. Finally, by the assumptions on $b$ and $B$, the function  $Z$ is bounded and by dominated convergence we get that
\[
\lim_{N \rightarrow \infty} \EE_\QQ |Z^N(X)-Z(X)| = 0.
\] 
\section[A local Zig-Zag algorithm]{A local Zig-Zag algorithm with subsampling for \\high-dimensional structured target densities
} 
\label{A local Zig-Zag algorithm}
In Subsection~\ref{Sampling diffusion bridges} we will show that the task of sampling diffusion bridges boils down to the task of sampling a high-dimensional vector $\xi^N \in \RR^{M}$ under the measure $\mathbb{P}^{u,v_T}_N$.
Define by $P_{\xi^N}$ the distribution of the vector $\xi^N$. Under the target measure,
\[
P_{\xi^N}(\dd \xi^N) = \pi(\xi^N) \dd \xi^N.
\]
We take the density $\pi$ to be the $M$-dimensional invariant density (target density) for the Zig-Zag sampler. An efficient implementation of piecewise deterministic Monte Carlo methods, including the local and fully local Zig-Zag sampler can be found in \textcite{mschauer/ZigZagBoomerang.jl}.

\subsection{Subsampling technique}\label{sub-sampling technique sub section}
In our setting, the integral appearing in the Girsanov formula \eqref{girsanov Z_2} poses difficulties when finding the root of equation \eqref{find the root lambda} and would require numerical evaluation of the integral, hence also introducing a bias. By adapting the subsampling technique presented in \textcite{bierkens2019} (Section 4) we avoid this problem altogether (see Subsection~\ref{Sampling diffusion bridges}). In general this technique requires 
\begin{enumerate}[label=(\alph*)]
    \item unbiased estimators for $\partial_i\psi$ i.e. random functions $\partial_i\Tilde{\psi_i}(\xi, U_i)$ such that \[E_{U_i}[\partial_i\Tilde{\psi_i}(\xi, U_i)] = \partial_i\psi(\xi),\] for all  $i$ and $\xi$. These random functions create new (random) Poisson rates given by 
    \begin{equation}
        \label{unbiased estimator}
        \Tilde{\lambda}_i(t; \xi, \theta; U_i) = (\theta_i \partial_i \Tilde{\psi}(\xi(t), U_i))^+, \qquad i = 1,2,...,d, 
    \end{equation}
    whose evaluation becomes feasible and computationally more efficient compared to the original Poisson rates given by equation (\ref{Poisson rates}).
    \item upper bounds $\bar\lambda_i:(\RR^+ \times \RR^d \times \{-1,+1\}^d) \rightarrow \RR^+$ for all $i = 1,...,d$ such that for any point $(\xi, \theta)$ and $t\ge 0$ we have
    \begin{equation}
        \label{condition for upper bounds}
            P\left(\Tilde{\lambda}_i(t; \xi, \theta; U_i)\le \Bar{\lambda}_i(t; \xi, \theta)\right) = 1.
    \end{equation}
    As we show in Algorithm \ref{ZigZagsubsampler} and in Section~\ref{Numerical Results}, these upper bounds are used for finding the roots of equation (\ref{find the root lambda}).    
\end{enumerate}
Algorithm \ref{ZigZagsubsampler} gives the algorithm for the Zig-Zag sampler with subsampling. It can be proved (see \cite{bierkens2019}) that the Zig-Zag sampler with subsampling has the same invariant distribution as its original and therefore does not introduce any bias. Note that we  slightly modified the algorithm from \textcite{bierkens2019} in order to reduce its complexity. In particular it is sufficient to draw new waiting times and to save the coordinates only when the \textit{if} condition at the \textit{subsampling step} of Algorithm \ref{ZigZagsubsampler} is true.

\begin{algorithm}[ht]
\caption{$d$-dimensional Zig-Zag sampler with subsampling}
\label{ZigZagsubsampler}
\begin{algorithmic}
\Procedure{ZigZag\_ws}{$\tau_{\text{final}}, \xi, \theta$} 
\State Initialise $k = 1 , \, t=0$
\State $\tau_j \sim$ IPP($\bar \lambda_j(\cdot;\xi,\theta)$), $j = 1
,...,d$
\While{$t\le \tau_{\text{final}} $}
\State $\tau^*, i^* \gets $ findmin($\tau_1,...,\tau_d$)
\State $\xi^{old} \gets \xi$ 
\State Update: $\xi \gets \xi + \theta(\tau^* - t)$
\State Update: $\Delta t \gets \tau^* - t; \quad  t  \gets \tau^*$
\State $U_{i^*} \sim \text{Law}(U_{i^*}), V \sim \operatorname{Unif}(0,1)$
\If{$V \le \Tilde{\lambda}_{i^*}(0, \xi, \theta, U_{i^*})/ \Bar{\lambda}_{i^*}(\Delta t; \xi^{old}, \theta)$} 
\Comment{Subsampling step}
    \State Save $\xi^{(k)} \gets \xi,\quad t^{(k)}\gets t$
    \State $k \gets k + 1$
    \State $\theta_{i^*} \gets -\theta_{i^*}$
    \For{$j \in \{1,\dots,d\}\setminus \{i^*\}$}
    \State $\tau_j \sim t + \text{IPP}(\Bar{\lambda}_j(\cdot; \xi,\theta))$
    \EndFor
\Else
    \State $\tau_{i^*} \sim t +\text{IPP}(\Bar{\lambda}_{i^*}(\cdot; \xi,\theta))$
\EndIf
\EndWhile
\State \textbf{return} Skeletons $(\xi^{(l)}, t^{(l)})_{l=1,2,...,k-1}$
\EndProcedure
\end{algorithmic}
\end{algorithm}

\subsection{Local Zig-Zag sampler}
\label{Subsec: Local ZigZag sampler section}
Subsection 3.1 of \textcite{2015arXiv151002451B} proposes a local algorithm for the \textit{Bouncy Particle Sampler} which is a process belonging to the class of piecewise deterministic Markov processes. Similar ideas apply to our setting. 

\begin{assumption}\label{Assumption dependecy structure}
 The Poisson rate $\lambda_i$ for a $d$-dimensional target distribution is a function of the coordinates $N_i \subset \{1, \dots, d\}$, 
 $$\lambda_i(s; \xi, \theta) = \lambda_i(s; \xi_k, \theta_k : k \in N_i).$$
\end{assumption}

Recall that by the definition of $\lambda_i$ (see equation \eqref{Poisson rates}), the $i$th partial derivative of the negative loglikelihood determines the sets $N_i$. Now let us suppose that the first event time $\tau$ is triggered by the coordinate $i$ so that at event time, the velocity $\theta_i$ is flipped. For all $\lambda_k$ which are not function of this coordinate ($k \not\in N_i$), we have
$$
\lambda_k^{old}(\tau + s) = \lambda_k^{new}(s),
$$
which implies that the waiting times drawn before $\tau$, are still valid after switching the velocity $i$. This allows us to rescale the previous waiting time and reduce the number of computations at each step. The sets $N_{1},...,N_{d}$ are connected to the factorisation of the target distribution and define its conditional dependence structure. Indeed, take a $d$-dimensional target distribution with the following decomposition 
\[
\pi(\xi) = \prod_{i = 1}^N \pi_i(\xi^{(i)})
\]
where $\xi^{(i)} := \{ \xi_j: j \in \Gamma_i\}$ and  $\Gamma_i \subset \{ 1,2,...,N\}$ defines a subset of indices. We have that 
\[
-\partial_k \log(\pi(\xi)) = -\sum_{i =1}^N \partial_k \log \pi_i(\xi^{(i)}), \quad k = 1,...,d 
\] 
where the $i$th term in the sum is equal to 0 if $k \notin \Gamma_i$. Since the Poisson rates \eqref{Poisson rates} are defined through the partial derivatives, the factorisation defines the sets $N_1,...,N_d$ of Assumption~\ref{Assumption dependecy structure}.

Algorithm \ref{ZigZaglocal} shows the implementation of the local sampler which exploits any conditional independence structure so that
the complexity of the algorithm scales well with the number of dimensions. 

The local Zig-Zag sampler simplifies to independent one-dimensional Zig-Zag processes if the coefficients are pairwise independent coefficients, as it was the case in the example of sampling a Brownian motion or Brownian bridge (see Subsection~\ref{Zig-Zag sampler for Brownian bridges}).
On the other hand, it defaults to Algorithm \ref{ZigZag1} when the dependency graph is fully connected, that is if $N_i = \{1, \dots, d\}, \forall i$.

\begin{algorithm}[ht]
\begin{algorithmic}
\State \emph{Input}: The bounds $\bar\lambda_i$ depend only on $\xi_k, \theta_k,$ for $k \in N_i$
\Procedure{ZigZag\_local}{$\tau_{\text{final}}, \xi, \theta$}
\State Initialise $k = 1 , \, t=0$
\State $\tau_j\sim$ IPP($ \lambda_j(\cdot; \xi,\theta)$), $j = 1
,...,d$ 
\While{$t\le \tau_{\text{final}} $}
\State $\tau^*, i^* \gets$ findmin$(\tau_1,...,\tau_d)$ 
\State Update: $\xi \gets \xi + \theta (\tau^* - t)$
\State Update: $\theta_{i^*} \gets -\theta_{i^*}; \quad t  \gets \tau^*$
\State Save $\xi^{(k)} \gets \xi;\quad t^{(k)} \gets t$
\State $k \gets k + 1$
\For{$j$ in $N_{i^*}$}
    \Comment{Local step}
    \State $\tau_j \sim t +\text{IPP}(\lambda_j(\cdot; \xi,\theta))$ 
\EndFor
\EndWhile
\State \textbf{return} Skeletons $(\xi^{(l)}, t^{(l)})_{l=1,...,k-1}$
\EndProcedure
\end{algorithmic}
\caption{$d$-dimensional local Zig-Zag sampler}
\label{ZigZaglocal}
\end{algorithm}

\begin{algorithm}[ht]
\begin{algorithmic}
\State \emph{Input}: The bounds $\bar\lambda_i$ depend only on $\xi_k, \theta_k,$ for $k \in \bar N_i$ and the random Poisson rates $\tilde\lambda_i$ (eq. \eqref{unbiased estimator}) depends only on $U_i$ (the randomizing argument of $\tilde\partial_i\psi$) and $\xi_k, \theta_k$ for $k \in \tilde N_i(U_i)$
\Procedure{ZigZag\_fully\_local}{$\tau_{\text{final}}, \xi, \theta$}
\State Initialise: $k = 1$, $t = \mathbf{0} \in \mathbb{R}^d$, $\tau^* = 0$ 
\State  $\tau_j \sim$ IPP($\bar\lambda_j(\cdot; \xi ,\theta$)), $j = 1,...,d$ 
\While{$\max(t)\le \tau_{\text{final}} $}
\State $\tau_{i^*}^{old} \gets \tau^*, \quad \xi^{old}_{i^*} \gets \xi_{i^*}$
\State $\tau^*, i^* \gets$ findmin$(\tau_1,...,\tau_d)$
\State $U_{i^*} \sim \operatorname{Law}(U_{i^*})$
\For{$j$ in $\bar N_{i^*} \cup \tilde N_{i^*}(U_{i^*})$}
    \State Update: $\xi_j \gets \xi_j + \theta_j (\tau^* - t_j)$
    \State Update: $t_j  \gets  \tau^*$
\EndFor
\State{$V \sim \operatorname{Unif}(0,1)$}
\If{$V \le \Tilde{\lambda}_{i^*}(0; \xi, \theta; U_{i^*})/ \Bar{\lambda}_{i^*}(\tau^* - \tau_{i^*}^{old}; \xi^{old}, \theta)$} 

    \State Update: $\theta_{i^*} \gets -\theta_{i^*}$
    \State Update: $k \gets k + 1$
    \State Save: $i^{(k)} \gets i^*, s^{(k)} \gets \tau^*, \xi^{(k)} \gets \xi_{i^*}$
    \For{$n$ in $\left(\bigcup_{j \in  \bar N_{i^*}} \bar N_{j}\right) \!\setminus\! \left(\bar N_{i^*} \cup \tilde N_{i^*}(U_{i^*})\right)$}
        \State Update: $\xi_n \gets \xi_n + \theta_n (\tau^* - t_n)$
        \State Update: $t_n  \gets  \tau^*$
    \EndFor
    \For{$j$ in $\bar N_{i^*}\!\setminus\!\{i^*\}$}
    \State  $\tau_j \sim \tau^* + \operatorname{IPP}(\Bar{\lambda}_j(\cdot; \xi,\theta))$
    \State $\tau_j^{old} \gets \tau^*, \quad \xi^{old}_j \gets \xi_j$
    \EndFor
\EndIf
\State  $\tau_{i^*} \sim \tau^* + \operatorname{IPP}(\Bar{\lambda}_{i^*}(\cdot;\xi,\theta))$
\EndWhile
\State \textbf{return} reflection tuples $((i^{(l)}, s^{(l)}, \xi^{(l)}))_{l= 1,...,k}$
\EndProcedure
\end{algorithmic}
\caption{Implementation of the $d$-dimensional fully local Zig-Zag sampler}
\label{alg: ZigZaglDoublyLocal}
\end{algorithm}

\subsection{Fully local Zig-Zag sampler}
Combining the subsampling technique and the local ZZ can lead to a further reduction of the complexity of the algorithm. Indeed the bounds for the Poisson rates might induce sparsity as  $\Bar{\lambda}_i$ can be function of few coordinates (see for example Subsection~\ref{sin model}). This means that, after flipping $\theta_i$, $\Bar{\lambda}^{old}_j(\tau + t) = \Bar{\lambda}^{new}_j(t) $
for almost all $j \ne i$ making the \text{if} statement in the \textit{local step} of Algorithm \ref{ZigZaglocal} almost always satisfied and improving the efficiency of the algorithm.  This means that, after flipping $\theta_i$, we have that $\Bar{\lambda}^{old}_j(\tau + t) = \Bar{\lambda}^{new}_j(t) $
for almost all $j \ne i$ or, in other words, the cardinality of the set $N_i$ in the \textit{local step} of Algorithm \ref{ZigZaglocal} is small. Furthermore, the evaluation of $\tilde \lambda_i(t, \xi, \theta)$ and  $\bar \lambda_i(t, \xi, \theta)$ for $i = 1,2,...,d$ does not necessarily require to access the location of all the coordinates $\xi_j$ so that, by assigning an independent time for each coordinate and updating only the coordinates needed for the evaluation of $\tilde \lambda_i$ and $\bar \lambda_i$, the algorithm can be made more efficient. This is shown in the fully local ZZ sampler (Algorithm \ref{alg: ZigZaglDoublyLocal}) where $\bar N_i, \tilde N_i(U_i)$ define respectively the subset and the random subset of the coordinates required for the evaluation of $\bar \lambda_i(\cdot; \xi, \theta)$ and $\tilde\lambda_i(\cdot; \xi, \theta; U_i)$. 

\subsection{Sampling diffusion bridges}
\label{Sampling diffusion bridges}

In order to employ the Zig-Zag sampler to simulate from the bridge measure we choose the truncation level $N$ in equation (\ref{sum expansion}). Then, under $\PP^{u, v_T}_N$
\[
 \pi(\dd \xi^N) \propto  Z^N(X)  \exp\left( \frac{-\|\xi^N\|^2}{2}\right) \dd \xi^N.
\]
This is a straightforward consequence of the change of measure in \eqref{approx diffusion bridge }  and the L\'evy-Ciesielski construction.

We need to make one further assumption:
\begin{assumption}\label{A2}
The drift $b$ of the diffusion process is twice differentiable.
\end{assumption}
Assumption~\ref{A2} is necessary in order to compute the $\xi_k$-partial derivative of the energy function, which becomes
\begin{equation} \label{partial}
\partial_k \psi(\xi^N) = \frac12\int_{S_k} h_k(s; \xi^N) \dd s + \xi_k, 
\end{equation}
where 
\[ h_k(s; \xi^N) =  \phi_k(s)\left(2b(X^{N}_s)b'(X^{N}_s) + b''(X^{N}_s)\right). \]
As the index $k$ in the Faber-Schauder basis function gets larger, both the magnitude of $\phi_k$ and the size of its support  decrease so that typically $\int h_k(s; \xi^N) \dd s$ gets smaller and $\partial_k \psi(\xi) \approx \xi_k$ which corresponds to the partial derivative of the energy function of a standardized Gaussian random variable with independent components. This justifies one more time the intuition that for high levels $i$, the random variables $\xi_{ij}$, $j =1 ,...,2^{i}-1$ are approximately normally distributed and almost independent from the other random coefficients.

In order to avoid the evaluation of the integral appearing in (\ref{partial}) and the difficulty of drawing a Poisson time from its corresponding rate (\ref{Poisson rates}), we employ the subsampling technique. Considering $\xi^N$ nonrandom, we take as an unbiased estimator for $\partial_k\psi(\xi_N)$ the (random) function 
\begin{equation}
    \label{eq: unbiased Poisson estimator}
    \frac12|S_k| h_k(U_k; \xi^N)  + \xi_k, 
\end{equation}
where $U_k \sim \text{Unif}(S_k)$ and as the bounding intensity rate
\begin{equation}
    \label{eq: bounding Poisson rate}
    \bar \lambda_k(t, \xi^N,  \theta^N) = \frac 12 |S_k||\theta_k|\Bar\Phi_k f(\xi^N(t)) + \left(\theta_k \xi_k(t)\right)^+ , \quad \xi^N \in \RR^{M},
\end{equation}
where  $\Bar\Phi_k = \max_s(\phi_k(s))$ and $f(\xi^{N}) \ge  \left|2b(X^{\xi^N}_s)b'(X^{\xi^N}_s) + b''(X^{\xi^N}_s)\right|,\, \forall s \in [0,T],\, \xi^N \in \RR^{M} $. The subsampling technique avoids the numerical computation of the time integral~\eqref{partial}, thus avoiding a numerical bias and reducing the computational effort from $\mathcal O(T)$ (for fixed discretization size) to $\mathcal O(1)$.
The variance of this unbiased estimator can be reduced by averaging over multiple independent uniform draws or similar strategies (see for example Section~\ref{Subsection: Numerical comparisons}), albeit  at the cost of additional computations.  In Section~\ref{Numerical Results} we show specifically for each numerical experiment how we derived the Poisson upper bounds $\Bar \lambda_i$. 

The compact support of the Faber-Schauder functions induce a sparse dependency structure on the target measure $\pi$. Indeed, $X_t$ only depends on those values of $\xi_{l,k}$ for which $t \in S_{l,k}$. See Figure~\ref{Fig: support of the fs functions} for an illustration. It is easy to see that this implies that $\frac{\partial \psi(\xi^N)}{\partial \xi_{(i,j)}} $ depends only on those $\xi_{(k,l)}$ for which the interior of $S_{i,j} \cap S_{k,l}$ is non-empty. In particular, define the relation $ \xi_{i,j} \ll \xi_{k,l}$ to hold if $S_{k,l} \subset S_{i,j}$. If this happens, then we refer to $\xi_{i,j}$ as the \emph{ancestor} of $\xi_{k,l}$ (and conversely $\xi_{k,l}$ as the \emph{descendant}). 
  Then the sets in Assumption~\ref{Assumption dependecy structure} (using double indexing) can be chosen as $N_{i,j} = \{ \xi_{h,d} \colon \xi_{h,d} \ll \xi_{i,j} \vee  \xi_{h,d} \gg \xi_{i,j}\}$ with cardinality $|N_{i,j}|= 2^{N-i + 1} + i -1 $, where $N$ is the truncation level. Formally, $N_{i,j}$ are the neighborhoods of the interval graph induced by $((S_{i,j}\colon i \in \{1,2,\dots,N\},\, j \in \{0,1,\dots,2^i-1\}))$ with vertices $\{(i,j) \colon i \in \{1,2,\dots,N\},\, j \in \{0,1,\dots,2^i-1\}\}$, where there is an edge between $(i,j)$ and $(l,k)$ if the interior of $S_{i,j} \cap S_{k,l}$ is non-empty (see Figure~\ref{depepndence graph}). The factorization of the partial derivatives leads to a specific dependency structure of the coefficients under the target diffusion bridge measure: the coefficient $\xi_{i,j}$ is conditionally independent of the coefficient $\xi_{k,l}$ if $S_{i,j} \cap S_{k,l} = \emptyset$ conditionally on the set of common ancestors $(\xi_{m,n} \colon \xi_{m,n} \ll \xi_{i,j} \wedge  \xi_{m,n} \ll \xi_{k,l})$. This argument is made more formal by decomposing the likelihood function in Appendix~\ref{App: fact diff bridge}.

\tikzset{main node/.style={circle,fill=blue!10, draw, minimum size=0.7cm,inner sep=0pt},
}
\tikzset{>=latex}         

\begin{figure}
    \centering

\begin{tikzpicture}
	\begin{pgfonlayer}{nodelayer}
	    \node [style=none] (A) at (-4, -1.5) {};
	    \node [style=none] (B) at (4, -1.5) {};
		\node [style=none] (0) at (-4, 0) {};
		\node [style=none] (1) at (-3, 0) {};
		\node [style=none] (2) at (-2, 0) {};
		\node [style=none] (3) at (-1, 0) {};
		\node [style=none] (4) at (0, 0) {};
		\node [style=none] (5) at (1, 0) {};
		\node [style=none] (6) at (2, 0) {};
		\node [style=none] (7) at (3, 0) {};
		\node [style=none] (8) at (4, 0) {};
		\node [style=none] (9) at (-4, 1) {};
		\node [style=none] (10) at (-2, 1) {};
		\node [style=none] (11) at (0, 1) {};
		\node [style=none] (12) at (2, 1) {};
		\node [style=none] (13) at (4, 1) {};
		\node [style=none] (14) at (-4, 2) {};
		\node [style=none] (15) at (0, 2) {};
		\node [style=none] (16) at (4, 2) {};
		\node [style=none] (17) at (-4, 3) {};
		\node [style=none] (18) at (4, 3) {};
	\end{pgfonlayer}
	\begin{pgfonlayer}{edgelayer}
	    \draw [Circle-Stealth] (A)node[below] {$0$}  -- (B) node[below] {$T$};
		\draw [{Bracket[width=1.0em]}-{Bracket[width=1.0em]}] (0) to node[below] {$S_{3,0}$} (1) ;
		\draw [{Bracket[width=1.0em]}-{Bracket[width=1.0em]}] (1) to node[below] {$S_{3,1}$} (2);
		\draw [{Bracket[width=1.0em]}-{Bracket[width=1.0em]}] (2) to node[below] {$S_{3,2}$} (3);
		\draw [{Bracket[width=1.0em]}-{Bracket[width=1.0em]}] (3) to node[below] {$S_{3,3}$}  (4);
		\draw [{Bracket[width=1.0em]}-{Bracket[width=1.0em]}] (4) to node[below] {$S_{3,4}$}  (5);
		\draw [{Bracket[width=1.0em]}-{Bracket[width=1.0em]}] (5) to node[below] {$S_{3,5}$}  (6);
		\draw [{Bracket[width=1.0em]}-{Bracket[width=1.0em]}] (6) to node[below] {$S_{3,6}$}  (7);
		\draw [{Bracket[width=1.0em]}-{Bracket[width=1.0em]}] (7) to node[below] {$S_{3,7}$}  (8) ;
		\draw [{Bracket[width=1.0em]}-{Bracket[width=1.0em]}] (9) to node[below] {$S_{2,0}$}  (10);
		\draw [{Bracket[width=1.0em]}-{Bracket[width=1.0em]}] (10) to node[below] {$S_{2,1}$} (11);
		\draw [{Bracket[width=1.0em]}-{Bracket[width=1.0em]}] (11) to node[below] {$S_{2,2}$}(12);
		\draw [{Bracket[width=1.0em]}-{Bracket[width=1.0em]}] (12) to node[below] {$S_{2,3}$}(13);
		\draw [{Bracket[width=1.0em]}-{Bracket[width=1.0em]}] (14) to node[below] {$S_{1,0}$}(15);
		\draw [{Bracket[width=1.0em]}-{Bracket[width=1.0em]}] (15) to node[below] {$S_{1,1}$}(16);
		\draw [{Bracket[width=1.0em]}-{Bracket[width=1.0em]}] (17) to node[below] {$S_{0,0}$} (18);
	\end{pgfonlayer}
\end{tikzpicture}
    \caption{Support of the Faber-Schauder functions $(\phi_{i,j} : i \in \{0,1,\dots,N\}, \, j = \{0,1,\dots,2^i-1\}$ with $N =3$. The coefficient $\xi_{i,j}$ is  independent of the coefficient $\xi_{k,l}$  conditionally on the set of common ancestors $(\xi_{m,n} \colon S_{m,n} \cap S_{i,j}\ne \emptyset \wedge S_{m,n} \cap S_{k,l} \ne \emptyset)$ if $S_{i,j} \cap S_{k,l} = \emptyset$.}
    \label{Fig: support of the fs functions}
\end{figure}
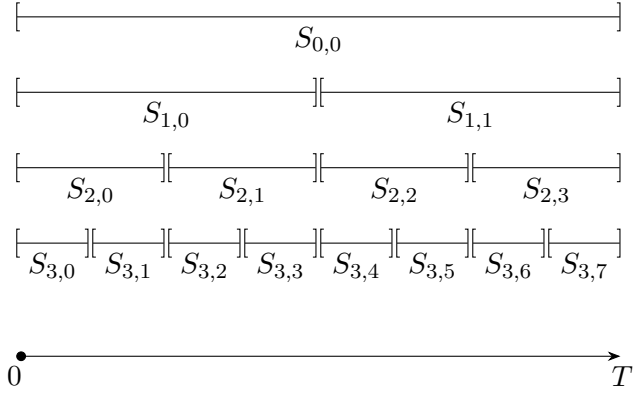

\section{Numerical results}\label{Numerical Results}
We show numerical results for three representative examples. 
 In general, when applying our method, we start from a model \eqref{sde}, devise a representation of the approximate diffusion bridge 
\eqref{approx diffusion bridge }, that we 
sample using generic implementations of algorithms 1-4 from our package, which are easily adapted to the task of sampling the coefficients of the Faber-Schauder expansion.  To this end, we provide the $k$-th partial derivative of the energy function \eqref{partial} or an upper bound to the Poisson rate \eqref{eq: bounding Poisson rate} as argument for the sampler, as well as the sets $N_{i,j}$ as given in Section \ref{Sampling diffusion bridges}. The reader is referred to the file \texttt{faberschauder.jl} in the public repository \url{https://github.com/SebaGraz/ZZDiffusionBridge/src} for the implementation of the expansion and for the generic implementation of the different variants of the Zig-Zag sampler to our package (\cite{mschauer/ZigZagBoomerang.jl}).

The first class of diffusion processes considered are diffusions with linear drift function (Subsection~\ref{linear diffusion model}). This is a special case, where our method does not require the subsampling technique described in Subsection~\ref{sub-sampling technique sub section} and only  Algorithm \ref{ZigZaglocal} has been employed. Notice that for this class, the transition kernel of the conditioned process is known. In Subsection~\ref{sin model}, we apply our method for diffusions which substantially differ from Brownian motions, being highly non-linear and multimodal and therefore creating challenging bridge distributions for standard MCMC. Here we use the the fully  local algorithm (Algorithm \ref{alg: ZigZaglDoublyLocal}).
In the specific example considered, the implementation of the Zig-Zag  sampler is facilitated by the drift function and its derivatives being bounded and therefore a bounded Poisson rate for the subsampling technique is available. In view of this, we choose for the third numerical experiment a diffusion with unbounded drift (Subsection~\ref{Diffusion model with unbounded drift}). For all the models, Assumptions \ref{A0}, \ref{A1} and \ref{A2} are immediate to verify and Assumption~\ref{Assumption dependecy structure} is satisfied. For each experiment, the burn-in $\tau_{\text{burn-in}}$ and final clock $\tau_{\text{final}}$  are manually tuned by inspecting the trace of $\xi^N$ and ensuring that the process reached stationarity before $\tau_{\text{burn-in}}$ and fully explore the state space before the final clock $\tau_{\text{final}}$. The computations are performed with a conventional laptop with a 1.8GHz intel core i7-8550U processor and 8GB DDR4 RAM. We wrote the program in Julia 1.4.2 which allows profiling and optimizing the code for high performance. The program is publicly available on GitHub at \url{https://github.com/SebaGraz/ZZDiffusionBridge} where the reader can follow the documentation to reproduce the results.
\subsection{Linear diffusions}\label{linear diffusion model}
A linear stochastic differential equation conditioned to hit a final point $v_T$ has the form
\begin{equation}
    \label{linear equation}
    \dd X_t = (\alpha +  \beta X_t) \dd t + \dd W_t, \qquad X_0 = u, X_T = v_T 
\end{equation}
for some $(\alpha,\beta) \in \mathbb{R}^2$. Assumptions \ref{A0}, \ref{A1} and \ref{A2} can be easily verified. 
In this case the energy function of the target distribution is 
\[
\psi(\xi^N) = C_1 -\ln(Z^N(X))+ \frac{\|\xi^N\|^2}{2} = C_2 + \frac{1}{2} \int_0^T \left(\beta^2\left(X_t^{\xi^N}\right)^2 + 2\alpha \beta X^{\xi^N}_t \right) \dd t + \frac{\|\xi^N\|^2}{2},
\]
for some constant $C_1,C_2$. Note that $\psi$ is a quadratic function of $\xi$, which means that the target density is still Gaussian under $\mathbb{P}^{u,v_T}_N$. It follows that
\[
\partial_{\xi_k} \psi(\xi^N) =   \int_{t \in S_k} \phi_k(t) \left( \beta^2 \left(\Bar{\Bar{\phi}} (t) u +  \Bar{\phi} (t) v_T/\sqrt{T} + \sum_{j \in N_k}\phi_j \xi_j \right) + \alpha \beta )\right) \dd t + \xi_k.
\] 
Interchanging the integral and the sum, this becomes 
\[
\partial_{\xi_k} \psi(\xi^N) =  \beta^2 \left(   \Bar{\Bar{\Phi}}_k u + \Bar{\Phi}_k v_T/\sqrt{T} + \sum_{j \in N_k} \Phi_{jk}  \xi_j \right) + \alpha \beta \Phi_k + \xi_k,
\]
 where $\Phi_k = \int \phi_k \dd t$, $\Phi_{jk} = \int \phi_k \phi_j \dd t$, $\Bar{\Phi}_k = \int \Bar{\phi} \phi_k \dd t$ and $\Bar{\Bar{\Phi}} = \int \Bar{\Bar{\phi}} \phi_k  \dd t$. This is a linear function of $\xi^N$ and, for each $i$, the event times with rates $\lambda_i$, see \eqref{Poisson rates}, can be directly simulated without upper bounds. Figure~\ref{figure ou} shows samples from the resulting diffusion bridge measure with $\alpha = -5, \beta = -1$ obtained with this method running the Zig-Zag  sampler for $\tau_{\text{final}} = 1000$, with a burn-in time of $\tau_{\text{burn-in}} = 10$. The closed form of the expansion of linear processes, or more generally, reciprocal linear processes,
 with the Faber-Schauder basis was also found and used in \textcite{van2018adaptive} for the problem of nonparametric drift estimation of diffusion processes. The results are validated by computing analytically the density of the random variable $X_{T/2}$ (which, for the linear case, is known in close form) and comparing this with its empirical density obtained from one sample of the Zig-Zag  process (see Figure~\ref{comparison}, left panel). 
\begin{figure}[ht]
    \centering
    \includegraphics[width=1\linewidth]{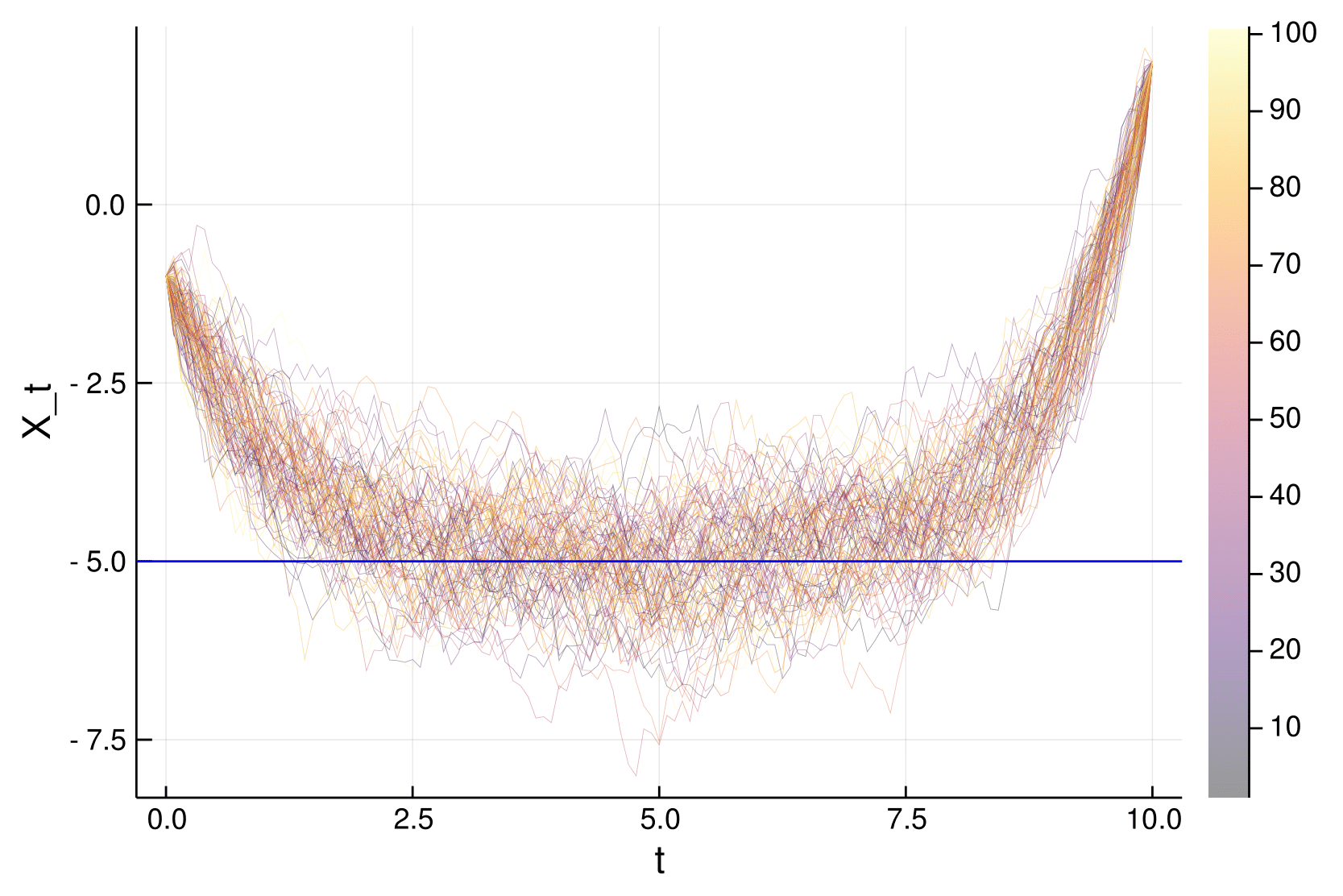}
        \caption{Simulation of the diffusion bridge measure (100 samples) given by equation (\ref{linear equation}) starting at $-1.0$ and conditioned to hit $2.0$ at $T=10$. $\alpha = -5.0, \beta = -1.0$ which is equivalent to a mean reverting process with mean reversion at $x = -5$ (straight line). The truncation level is $N = 6$, final clock $\tau_{\text{final}} = 1000$ and burn-in $\tau_{\text{burn-in}} = 10$.}
    \label{figure ou}
\end{figure}
\subsection{Non-linear multi-modal diffusions}\label{sin model}

The stochastic differential equation considered here has the form
\begin{equation}
    \label{sin diffusion model}
    \dd X_t = \alpha \sin(X_t)\dd t +  \dd W_t, \qquad X_0 = u, X_T= v_T
\end{equation}
for some $\alpha \ge 0$. When $\alpha = 0 $ the process is a standard Brownian motion while for positive $\alpha$, the process is attracted to its stable points $(2k -1)\pi, \, k \in \mathbb{N}.$ Assumption~\ref{A0},~\ref{A1},~\ref{A2} follow from drift, its primitive and its derivative being globally bounded. Fixing $N$, the energy function is given by
$$
\psi(\xi^N) =  \frac{\alpha}{2} \int_0^T \left( \alpha \sin^2 (X^{\xi^N}_t) + \cos(X^{\xi^N}_t)\right) \dd t +  \frac{\|\xi^N\|^2}{2}.
$$
Using trigonometric identities, we obtain that
$$
\partial_{\xi_k} \psi(\xi^N) =  \frac{1}{2} \int_{S_k} \phi_k(t) \left( \alpha^2 \sin\left(2X^{\xi^N, k}_t\right) - \alpha \sin\left(X^{\xi^N, k}_t \right)\right) \dd t + \xi_k
$$ 
where $ X^{\xi^N, k}_t := \Bar{\Bar{\phi}}(t) u + \Bar{\phi}(t) v_T/\sqrt{T} + \sum_{j \in N_k}\phi_j(t) \xi_j$.
To avoid the need to find the roots of equation \eqref{find the root lambda} we apply the subsampling technique described in  Subsection~\ref{sub-sampling technique sub section}.
Since the drift and its derivatives are bounded, we can easily find the following upper bound for (\ref{unbiased estimator}):
\begin{equation}
    \label{eq: bound}
    \bar{\lambda}_k(t) = |\theta_k|a_1 + (\theta_k\xi_k(t))^+,
\end{equation}
with $a_1 = \Bar \Phi_k S_k (\alpha^2 + \alpha)/2$, $\Bar \Phi_k = \max(\phi_k)$ and $\xi_k(t) = \xi_k + \theta_k t$. In this case, the upper bound $\bar \lambda_i$ is a function only of the coefficient $\xi_i$. 
Figure~\ref{fig:sinsde} shows the results obtained with this method setting $\alpha = 0.7$. For this diffusion, the non-linearity and multiple modes make the mixing of the Zig-Zag  sampler slower so we set $\tau_{\text{final}} = 10000$ and burn-in $\tau_{\text{burn-in}} = 10$.  
\begin{figure}[ht]
    \centering
    \includegraphics[width=1\linewidth]{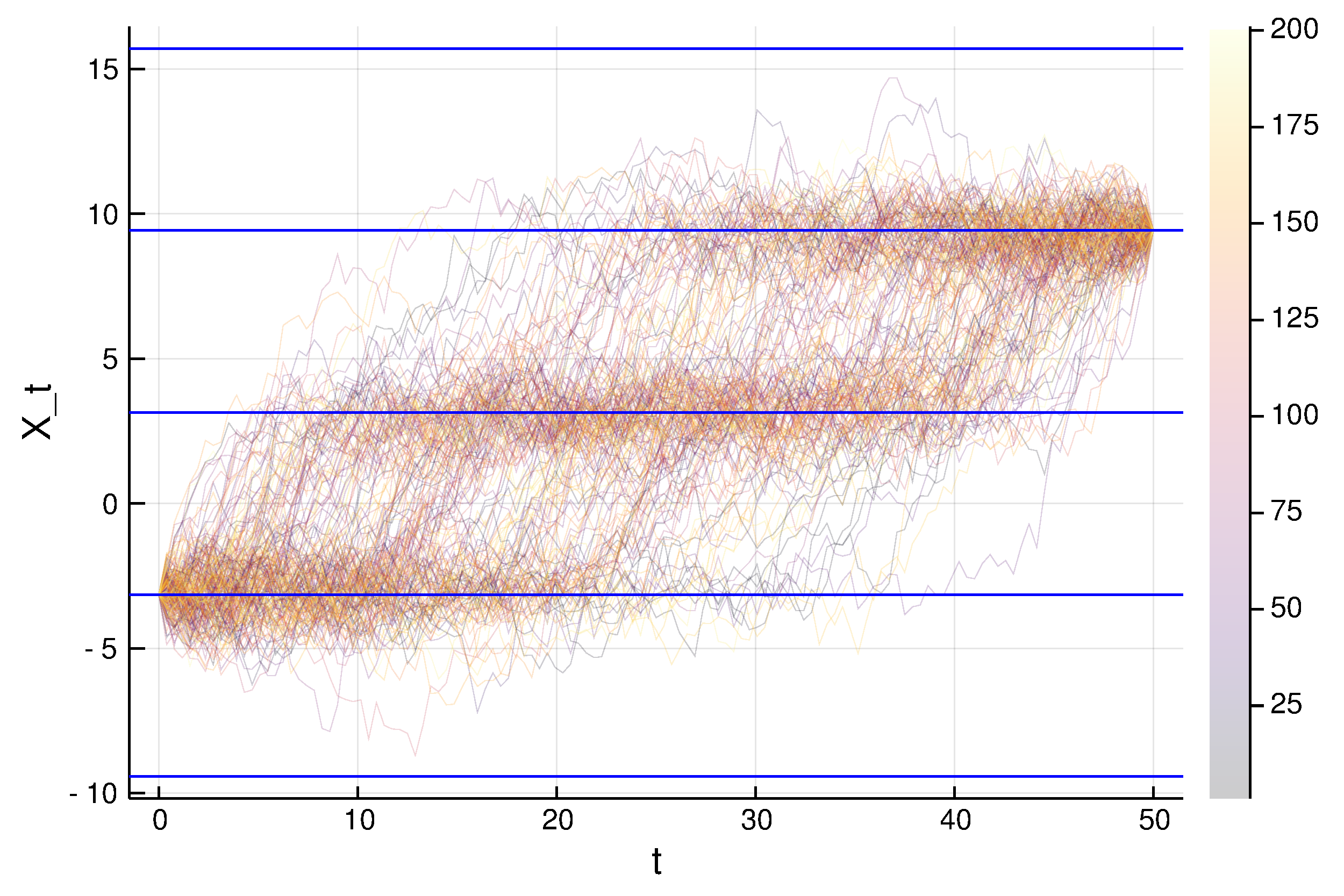}
    \caption{Simulation of the diffusion bridge measure (200 samples) given by equation (\ref{sin diffusion model}) with $\alpha = 0.7 $ starting at $-\pi$ at time $0$ and hitting $3\pi$ at $T=50$. Truncation level $N = 6$, final clock $\tau_{\text{final}} = 10000$ and burn-in $10$. The straight horizontal lines are the attraction points of the process.}
    \label{fig:sinsde}
\end{figure}

Analysing the goodness of the empirical diffusion bridge distribution obtained is a difficult task since the true conditional distribution is not known in a tractable form. We start by checking if some geometrical properties of the diffusion bridge distributions are preserved in the simulations. For example, in Figure~\ref{fig:sinsde}, it can be noticed that the diffusion is attracted to the stable points $ \pm \pi, \pm 3\pi,...$, and symmetric (geometrically speaking, after rotation) around the vertical axes $t = T/2$. We furthermore validate our method by simulating forward diffusion processes, using Euler discretization in a fine grid, and retaining only the paths which end in a $\epsilon$-ball of a certain point at time $T$ ($\epsilon$-ball forward simulation). If the final point is such that the probability of ending in this $\epsilon$-ball is high enough, we can create in this way a sample from the approximated bridge and compare it to the samples obtained from the Zig-Zag. The right panel of Figure~\ref{comparison} shows the joint empirical distribution with the two methods of the first quarter and third quarter random variables. Finally, Figure~\ref{figure qqplots} illustrates that the marginal distribution of the coefficients in higher levels is approximately Gaussian and the non-linearity of the process is absorbed by the coefficients in low levels.

\begin{figure}[ht]
\centering
\includegraphics[width=0.35\linewidth]{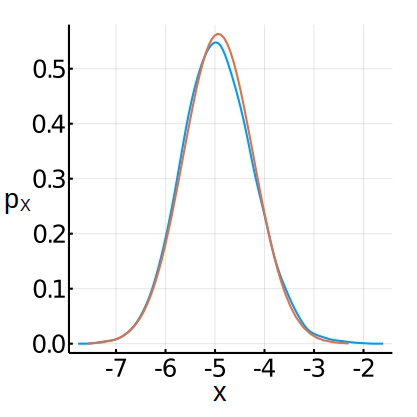}\includegraphics[width=0.6\linewidth]{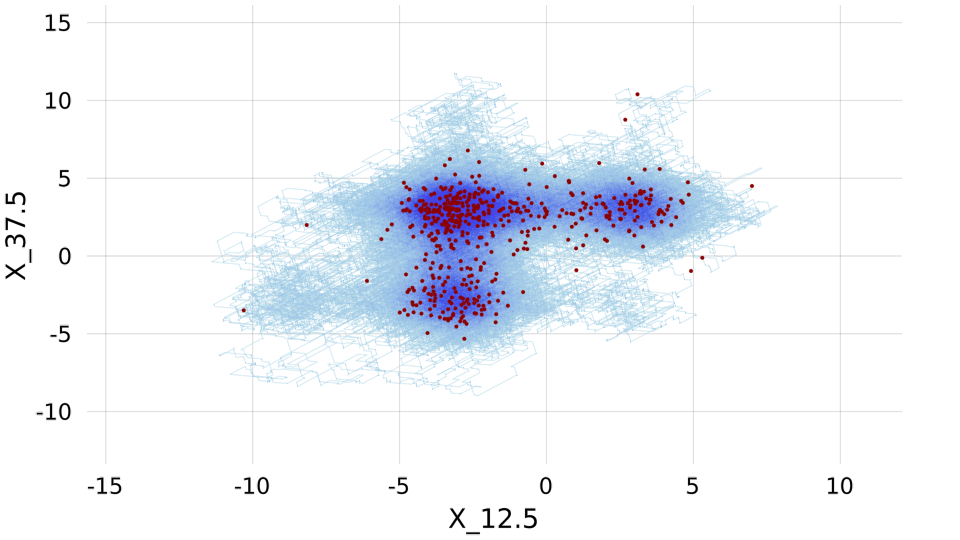}
\caption{On the left panel: comparison between empirical distribution (blue line, computed with a kernel estimator) and the exact distribution (red line) of the mid-point random variable $X_{5}$ for the linear diffusion (equation $\ref{linear equation}$) with $a = -5$ and $b = -1 $. The empirical distribution has been extracted from the same experiment shown in Figure~\ref{figure ou}. On the right panel: comparison between the joint distribution of the variables $X_{T/4}$ and $X_{3T/4}$ of the process given in equation (\ref{sin diffusion model}) starting at $-\pi$ and hitting $\pi$ at $T=50$. The scatter plot with red dots are obtained with $\epsilon$-ball Euler simulation with $\epsilon = 0.1$ and discretization $\Delta t = 0.0005 $ while the blue continuous path is the Zig-Zag path.} 
\label{comparison}

\end{figure}
 \begin{figure}[ht]
    \centering
    \includegraphics[width=1\linewidth]{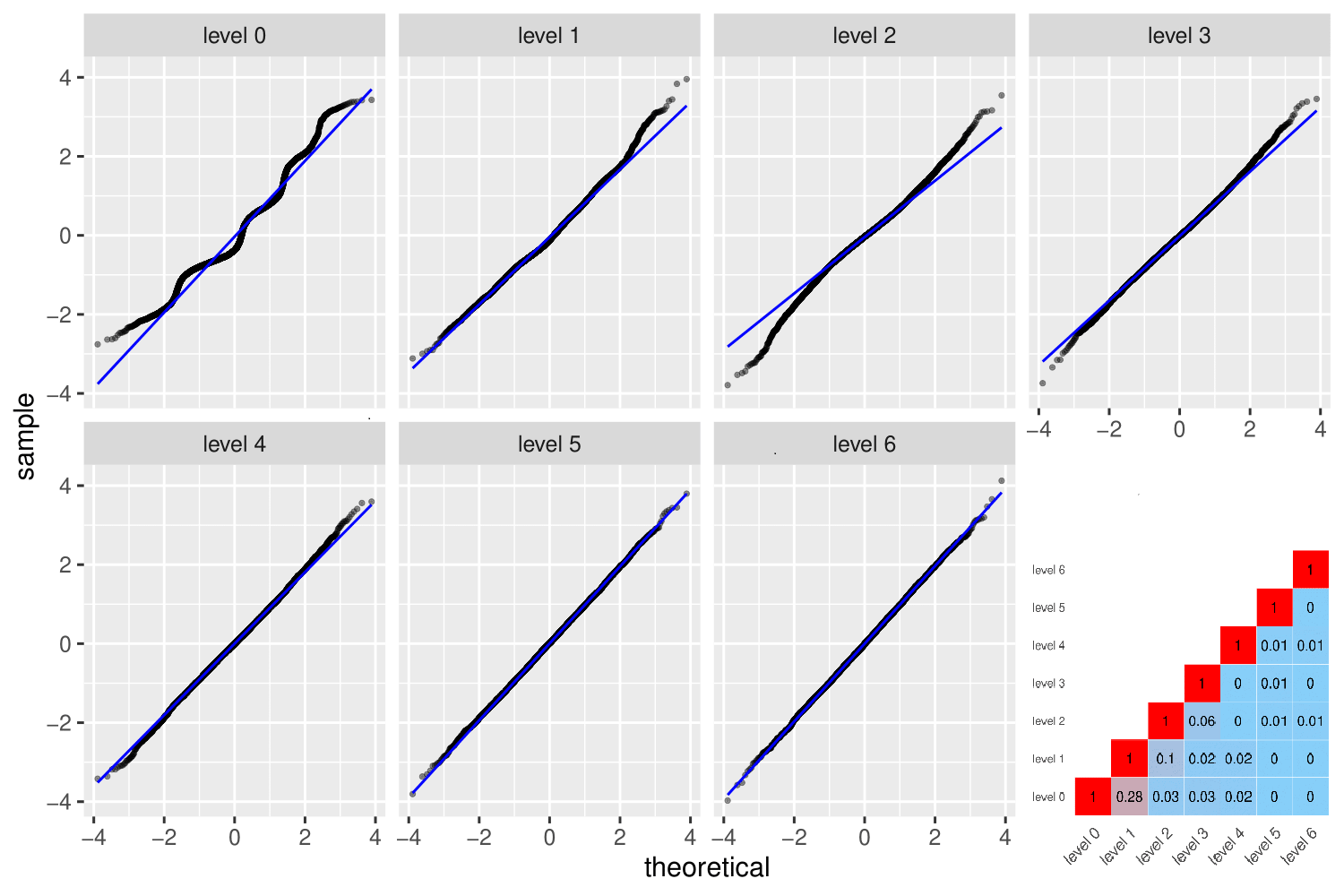}
      \caption{Q-Q (quantile-quantile) plot against standard normal distributions of the sample path of 7 coefficients respectively at level  $0, 1, 2, 3, 4, 5, 6$ targeting the conditional bridge measure given by equation (\ref{sin diffusion model}) with $\alpha = 0.7$ and initial point $u= 0$ and final point  $v= 0$ at $T = 100$. On the bottom right panel, the heatmap of the absolute value of the sample correlation between the coefficients at different levels. The blue straight lines correspond to the marginal measures of the coefficients relatively to a Brownian bridge.}
  \label{figure qqplots}
\end{figure}
 
\subsection{Diffusions  with unbounded drift}\label{Diffusion model with unbounded drift} Here we consider \textit{stochastic exponential logistic models}. For this class, the process grows exponentially with rate $r$ until it reaches its saturation point $K$. Its dynamics are perturbed by noise which grows as the population grows. The resulting stochastic differential equation takes the form 
\begin{equation}
    \label{exponential growth model}
    \dd Y_t = rY_t (1 - Y_t/K) \dd t + \beta Y_t \dd W_t, \qquad X_0 = u>0, \quad X_T = v_T>0. 
\end{equation}
We can transform the process in order to get a new process with unitary diffusivity $\sigma = 1$ (Lamperti transform with $X_t = -\log(Y_t)/\beta$). The transformed differential equation becomes
$$
\dd X_t = (c_1 + c_2 e^{-\beta X_t})\dd t + \dd W_t, \qquad X_0 = -\log(u)/\beta, \quad X_T = -\log(v)/\beta.  
$$
with $c_1 = \beta/2 - r/\beta$ and $c_2 = r/(\beta K).$ Note that the  drift function $b$ of the transformed process is not global Lipschitz continuous. Nevertheless Assumptions  \ref{A1} and \ref{A2} are satisfied and by Remark \ref{Remark for logstic}, also Assumption~\ref{A0} is verified.  
In this case, the partial derivative of the energy function is given by
$$
\partial_k \psi(\xi^N)  = \frac{1}{2}\int_{S_k} \phi_k(s) \left(  a_1 e^{-\beta X^{\xi^N}_s} -  a_2 e^{-2\beta X^{\xi^N}_s} \right)\dd s + \xi_k,
$$
where $a_1 = 2r^2/(\beta K), \, a_2 = a_1/K$. As before, it is not possible to simulate directly the first event time using the Poisson rates given by equation (\ref{Poisson rates}). 
The subsampling technique requires an upper bound for the unbiased estimator (\ref{unbiased estimator}). Define the following quantities
$$
b^{(1)}_k := \inf_{s \in S_k} \left\{\Bar{\Bar{\phi}} (s) u_0 +  \Bar{\phi} (s) v_T/\sqrt{T} + \sum_{i \in N_k}  \phi_i(s) \xi_i\right\}, \qquad b^{(2)}_k := \inf_{s \in S_k} \left\{\sum_{i \in N_k}\phi_i(s) \theta_i\right\}.
$$
For any $a,b,c \in \mathbb{R}, \, (a + b + c)^+ \le (a)^+ +  (b)^+ +  (c)^+$ and hence a valid upper bound for the Poisson rate (\ref{unbiased estimator}) is given by
    \begin{equation}
        \label{eq: bound exponential logistc}
        \Bar{\lambda}_k(t) = \lambda_k^{(1)}(t) + \lambda_k^{(2)}(t) + \lambda_k^{(3)}(t)
    \end{equation}
    with 
    \[
     \lambda_k^{(1)}(t) = \max\left(0, \theta_k \xi_k(t)\right),
    \]
    \[
    \lambda_k^{(2)}(t) = \max\left(0, \frac{1}{2} \theta_k \Bar{\phi}_k S_k z^{(1)}_k e^{-\beta^\star_k \, t}\right),
    \]
    \[
    \lambda_k^{(3)}(t) = \max\left(0, -\frac{1}{2} \theta_k \Bar{\phi}_k S_k z^{(2)}_k e^{2\beta^\star_k \, t}\right)
    \]
and $$z^{(1)}_k = a_1 \exp(-\beta b^{(1)}_k),\, z^{(2)}_k = z^{(1)}_k \exp(-\beta b^{(1)}_k),\, \beta^\star_k = -\beta b^{(2)}_k, \, \Bar{\phi_i} = \max_s \phi_i(s).$$ 

Using the \textit{superposition theorem} (see for example \cite{grimmett2001probability}), we can simulate a waiting time with Poisson rate (\ref{eq: bound exponential logistc}) by means of simulating three waiting times according to the Poisson rates $\lambda_k^{(1)}, \lambda_k^{(2)}, \lambda_k^{(3)}$ and then take the minimum of the three realizations. Since at any time $t>0$, either $\lambda_k^{(2)}(t)$ or $\lambda_k^{(3)}(t)$ is 0, we just need to evaluate two waiting times. Figure~\ref{fig:sub2} shows the results obtained with our method for this process. The final clock of the Zig-Zag sampler is set to $T^{\star} = 1000$ and initial burn-in time $\tau_{\text{burn-in}} = 10$.
\begin{figure}[ht]
    \centering
    \includegraphics[width=1\linewidth]{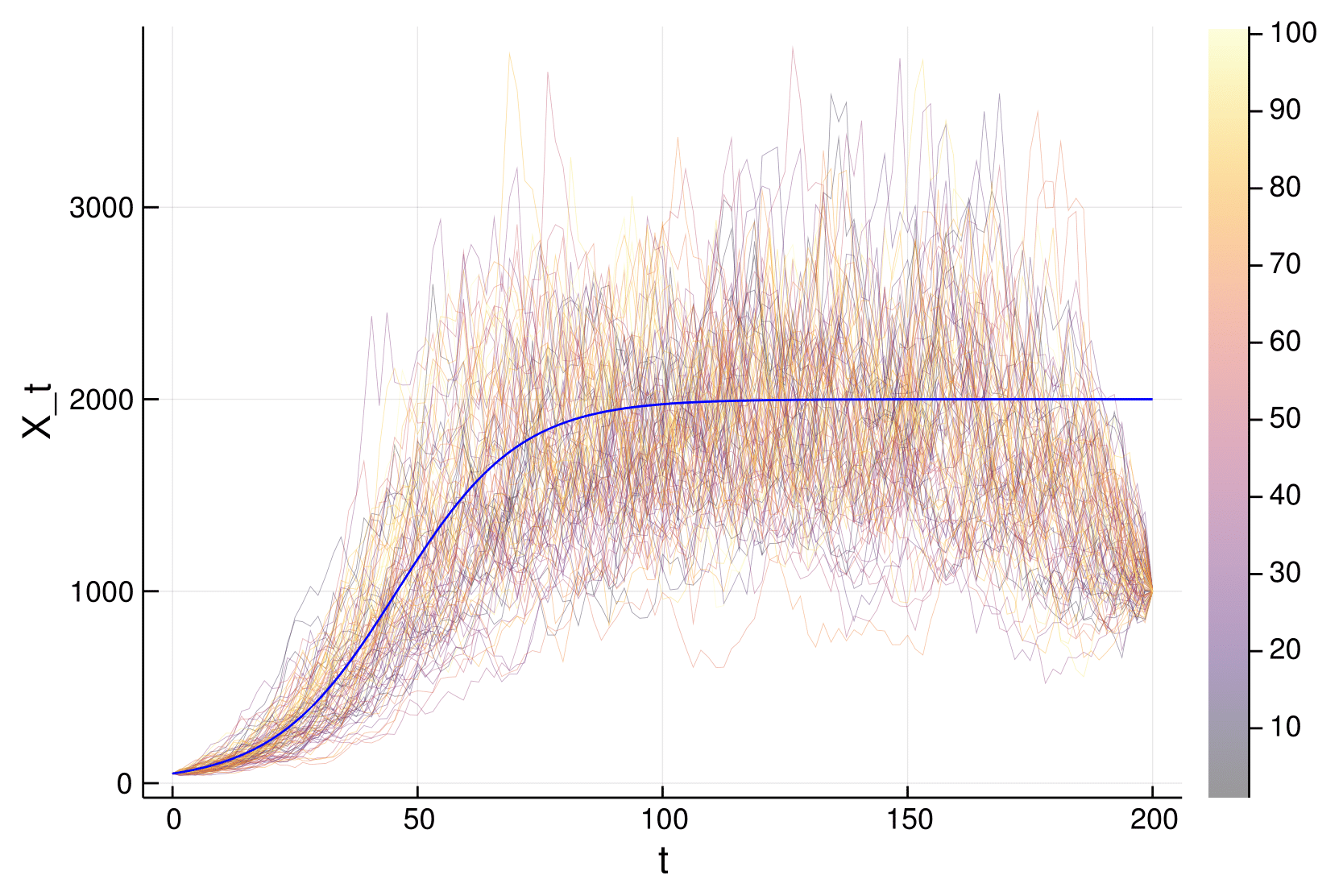}
    \caption{Simulation of the diffusion bridge measure (100 samples) given by the logistic growth model (equation (\ref{exponential growth model})) with parameters $K= 2000, r= 0.08,\beta = 0.1$, starting at the value $50$ and hitting $1000$ at time $200$. Truncation level $N = 6$, final clock $\tau_{\text{final}} = 1000$ and burn-in $\tau_{\text{burn-in}}=10$ . The blue smooth line is the solution of the deterministic logistic model without final condition.}
    \label{fig:sub2}
\end{figure}

\subsection{Numerical comparisons}\label{Subsection: Numerical comparisons}
 In this section we benchmark the fully local Zig-Zag sampler against the Metropolis-adjusted Langevin algorithm (MALA) (\cite{10.2307/3318418}),  Hamiltonian Monte Carlo (HMC) (\cite{DUANE1987216}) and another well known PDMP, the Bouncy particle sampler  (\cite{2015arXiv151002451B}). The Bouncy Particle sampler can use the exact subsampling technique in a very similar way as explained in Subsection~\ref{sub-sampling technique sub section}. According to the scaling limit results obtained in \textcite{bierkens2018highdimensional}, the Zig-Zag is more efficient compared to the Bouncy Particle sampler in a high dimensional setting when the  conditional dependency graph corresponding to the target measure exhibits sparsity (which clearly is the case here). The MALA sampler is a well known discrete-time Markov chain Monte Carlo method which performs informed updates through the gradient of the target distribution. HMC  is considered a state-of-the-art algorithm.  In contrast to PDMPs,
for  HMC and MALA the gradient needs to be fully evaluated and no subsampling methods can be exploited. Thus,  the integral in \eqref{partial} needs to be computed numerically, introducing bias. Furthermore, contrary to PDMPs, the resulting Markov chain is reversible. We study the performance of the samplers for the stochastic differential equation  \eqref{sin diffusion model} with $u, v = 0$ and the time horizon $T = 100$ and we let  $\alpha$ vary. As $\alpha$ increases, the target distribution on the coefficients presents higher peaks and valleys and is therefore a challenging distribution for general Markov chain Monte Carlo methods. We fix the refreshment rate of the Bouncy Particle sampler to 1 to avoid a degenerate behaviour and implement the MALA algorithm with  adaptive step size over 250,000 iterations. 
We used the automatically tuned  dynamic integration time HMC Algorithm (\cite{betancourt2018conceptual}) with $3,000$ iterations and with diagonal mass matrix and integrator step size both adaptively tuned in a warm-up phase of $2,000$ iterations, with the latter adapted using a dual-averaging algorithm (\cite{hoffman2014no}) with target acceptance statistic of 0.8. The algorithm is provided in the package \texttt{AdvancedHMC.jl} (\cite{ge2018t}). 
The integral appearing in the gradient of the energy function is computed for the MALA sampler and for the HMC sampler numerically with a simple Euler integration scheme over $2^{N+1}$ points, where $N$ is the truncation level which is fixed to 6 for all the experiments. The final clock for the PDMPs is $T' = 25,000$. We also include the numerical results of two variants of the Zig-Zag sampler: 
\begin{itemize}
    \item[(ZZv1)] where  the partial derivative in \eqref{partial} is estimated by averaging over multiple independent realizations of \eqref{eq: unbiased Poisson estimator}, with the number of realizations is proportional to the length of the range of the integral in \eqref{partial};
\item[(ZZv2)]  where   the partial derivative in \eqref{partial} is estimated  by decomposing the range of the integral into $N$ subintervals (with $N$ proportional to the length of the range of the integral) and evaluating the integrand at a random point drawn inside each subinterval.
\end{itemize}
 These variants of the Zig-Zag have been proposed after noticing that the coefficients at low levels are the ones deviating the most from normality and the partial derivative with respect to those coefficients have larger support. This suggests that refining the estimates of the partial derivative of the energy function only with respect to those coefficients can be beneficial and improve the performances of the PDMPs. Figure~\ref{fig: comparison} shows the results obtained. The fully  local Zig-Zag and its variants always outperform the Bouncy Particle sampler, the MALA and the HMC with respect to the statistics considered, namely the mean, median and minimum of the effective sample size computed for each coefficient of the Faber-Schauder expansion and the effective sample size of the coefficient $\xi_{0,0}$, which gives the middle point $X_{T/2}$ and, as shown in Figure~\ref{fig: comparison}, is one of the most difficult coefficients to sample.
\begin{figure}[ht!]
    \centering
    \includegraphics[width=1.0\linewidth]{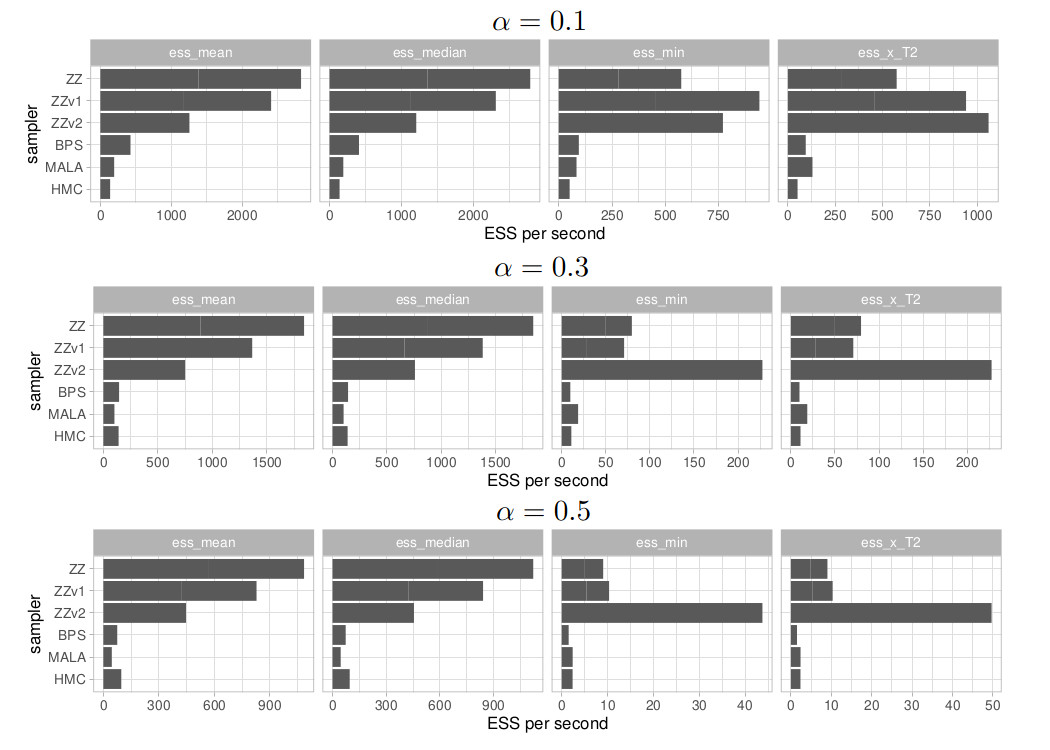}
    \caption{Performance comparison of the fully  local Zig-ZaZ (ZZ), its variants (ZZv1 and ZZv2), the Bouncy Particle sampler with Subsampling (refreshment rate set to $1$), MALA and HMC sampler. The performance measure considered here are respectively the effective sample size (ESS) of the middle point $X_{T/2}$, the median and the minimum  of the ESS over the dimension of the coefficients of the expansion. The target diffusion bridge with drift $b(x) = \alpha \sin(x)$ with $u, v = 0$ and $T = 50$ and truncation level $N = 6$. The final clock for the PDMPs is set to $T' = 25000$, the number of iterations for the MALA is set to be 250000 with adaptive time step targeting the acceptance rate 0.6 (\cite{roberts1998optimal}) and the number of iteration for the HMC is $3000$ with the algorithm fine-tuned by the package \texttt{AdvancedHMC.jl}. All the quantities are normalized by the runtime of execution. The asymptotic variance estimate used for computing the ESS is obtained using batch means.  Notice that, while the subsampling technique adopted for the piecewise deterministic Monte Carlo methods does not introduce bias on the target distribution, the numerical integration adopted for the MALA and HMC samplers introduces bias on the target distribution.}
    \label{fig: comparison}
\end{figure}

\section{Extensions}\label{Subsection: Informal scaling analysis}
 In this section we briefly sketch the extension of the approach presented in Section~\ref{faber schauder expansion of diffusion processes}  to a class of multi-dimensional diffusion bridges. Then we study the scaling properties of the algorithm with respect to three quantities: the time horizon of the diffusion bridge $T$, the truncation level $N$ and the dimensionality of the diffusion bridge $d$.
\subsection{Multivariate diffusion bridge}\label{subsec: multivariate diffusion bridge}
Consider a $d$-dimensional diffusion bridge given the stochastic differential equation
\begin{equation*}
    \dd X_t = \nabla B(X_t) \dd t + \dd W_t, \quad  X_0 = u,X_T = v_T, \quad u,v_T \in \RR^d,
\end{equation*}
where $(W_t)_{t\ge 0}$ is a $d$-dimensional Wiener process and $\nabla B: \RR^d \mapsto \RR^d$ is a conservative vector field, i.e. the gradient of some scalar-valued function $B$. Denote its law by $\PP^{u,v_T}$. Similarly to equation \eqref{girsanov Z_2}, under mild assumption on $\nabla B(X_t)$, we can write the change of measure between $\PP^{u,v_T}$ and the standard $d$ dimensional Wiener bridge measure $\QQ^{u,v_T}$ as
\begin{equation*}
    \frac{\dd \PP^{u,v_T}}{\dd \QQ^{u,v_T}}(X) = C\exp\left\{B(X_T) - B(X_0) - \frac 1 2 \int_0^T  \|b(X_t)\|^2 + \Delta B(X_t)  \, \dd t \right\},
\end{equation*}
where $b = \nabla B$, $\Delta B$ is the Laplacian of $B$ and $C$ is a normalization constant which depends on $u,v_T$ and $T$. It is straightforward to derive an equivalent approximated measure as done in equation \eqref{approx diffusion bridge } and prove Theorem~\ref{main theorem} in the multi-dimensional setting. In this case the $d$ dimensional diffusion bridge measure is approximated by the same truncated expansion of equation \eqref{sum expansion} with coefficients $\xi_{i,j}, i = 0,...,N; \, j = 0,...,2^N$ which now are $d$  dimensional random vectors. The total dimensionality of the target density for diffusion bridges becomes $d(2^{N+1} -1)$ . Similarly to the one dimensional case, Proposition~\ref{conditional independece structure} holds (the proof follows in a similar fashion of the proof of Proposition~\ref{conditional independece structure} and is omitted for brevity). The Poisson rates  $\lambda^k_{i,j}$ (where, $k \in \{1,...,d\}$  defines the coordinate of the $d$ dimensional process) are functions of the sets $N_{i,j}^k$ which have maximum admissible size  $|N_{i,j}^k| = d(2^{N-i+1} + i -1) \le d(2^{N+1} - 1)$ so that Assumption~\ref{Assumption dependecy structure} holds.

\subsection{Scaling for large \texorpdfstring{$T, N, d$}{T, N, d}}\label{subsection: sca;omg for large T, N, d}
The following scaling analysis serves as preliminary work for future explorations. The expected run time of the fully local Zig-Zag sampler (Algorithm \ref{alg: ZigZaglDoublyLocal}) is intimately related with the number of Poisson event times for a fixed final clock $\tau_{\text{final}}$ and the conditional independence structure appearing in the target measure. The former is determined by the size of the Poisson bounding rates $\bar \lambda_1,...,\bar \lambda_{M}$ while the latter is defined by the sets $N_{1},...,N_{M}$ and determine the complexity of the \textit{local step} of Algorithm \ref{ZigZaglocal}. 

\begin{rmk}\label{prop: lambda in terms of T and i}
    For a fixed position and velocity, the Poisson bounding rates used in the Zig-Zag sampler with subsampling (Algorithm \ref{ZigZagsubsampler}) for diffusion bridges are of the form $\bar\lambda_{i,j} = C_1  T^{3/2}2^{-3i/2} + C_2, \, i = 0, 1,...,N; \, j = 0,1,...,2^i-1$, for some terms $C_1$ and $C_2$ which do not depend on $i$ and $T$.
\end{rmk}
\begin{proof}
For every $i = 0, 1,...,N; \, j = 0,1,...,2^i-1$, the time horizon $T$ and scaling index $i$ enter in the bounding rates of \eqref{eq: bounding Poisson rate} through the terms $S_{i,j}$ and $\bar \phi_{i,j}$. The first term is of $\mathcal{O}(T 2^{-i})$ and the second one is of $\mathcal{O}(\sqrt{T}2^{-i/2})$. 
\end{proof}

Proposition~\ref{prop: lambda in terms of T and i} helps understanding how the complexity of the algorithm scales as $T$ grows and as the truncation level $N$ grows. As $T$ grows, the Poisson rates increase with order $T^{3/2}$ so that the total number of Poisson events for a fixed Zig-Zag clock increases with the same order. 

Furthermore, as the truncation level $N$ grows, the change of measure  affects less and less the coefficients in high levels and the partial derivative of the energy function goes to zero with rate $2^{-3N/2}$) implying that the for large $N$, $\Bar \lambda_{N,j} \approx C_2 = (\xi_{N,j} \theta_{N,j})^+$ (which is the Poisson rate for the Brownian bridge). As a consequence, the Poisson processes of the coefficients in high levels ($i$ large) will be approximately independent with all the other coefficients and not function of the level $i$ so that the complexity of Algorithm~\ref{alg: ZigZaglDoublyLocal} scales approximately linearly with the number of mesh points. This is opposed to the standard Zig-Zag algorithm (Algorithm~\ref{ZigZag1}) which does not take advantage of the approximate independence of the coefficients in high levels so that the $2^{N+1}-1$ waiting times have to be renovated at every reflection of each coefficient.

The scaling result under mesh refinement (when $N$ grows) is unsatisfactory as the algorithm deteriorates when the resolution of the path increases. A partial solution can be obtained by letting the absolute value of the marginal velocities $|\theta_{N,j}|$ decrease as $N$ increases. This would enhance the scaling property of the algorithm under mesh refinement at the cost of a slow mixing of high level components. An alternative solution is considered in  \textcite{bierkens2020boomerang} where the authors enhance the scaling property of the algorithm by replacing the Zig-Zag algorithm with the \textit{Factorised Boomerang sampler}. The Factorised Boomerang sampler differs form the Zig-Zag by having curved trajectories which are invariant to a prescribed Gaussian measure. This allows the process to sample from the Gaussian measure (Brownian bridge measure) at barely no cost. However, the main drawback of the factorized Boomerang sampler is the current limiting techniques for simulating Poisson times given the curved trajectories which lead to Poisson upper bounds which are not tight.  

 Finally, when the dimensionality of the diffusion bridge is $d \gg 1$, both the dimensionality of the target density of the Zig-Zag sampler and the sets $N_{i,j}^k$ for $i = 0,...,N; \, j = 0,...,2^i-1;\, k = 1,...,d$ grow linearly with $d$ so that, in general, we expect the computational time to grow with rate $d^2$. When the drift of the multidimensional bridge presents a sparse structure, i.e. not all coordinates of the differential equation interact directly with each other, as common in the high dimensional case arising from discretized stochastic partial differential equations (e.g. \cite{mider2017continuousdiscrete}, Section 6), the size of those sets reduces considerably until the extreme case of $d$ independent diffusion bridges where the sets $N_{i,j}^k$ are not anymore a function of $d$ and clearly the complexity grows linearly with the dimensionality $d$.

\section{Conclusions}\label{Conclusions}
In this paper we have introduced a new method for the simulation of diffusion bridges which substantially differs from existing methods by using the Zig-Zag  sampler and the basis of representation adopted. We motivated both choices and presented  the method and its implementation. The resulting simulated bridge measures are shown to be close to the real measures, even for low dimensional approximations and bridges which are highly non-linear. We took advantage of the subsampling technique and a local version of the Zig-Zag  to sample high dimensional approximation to conditional measures of diffusions with intractable transition densities. The subsampling technique is a key property in favour of using piecewise deterministic Monte Carlo methods for diffusion bridges (and whenever the target measure is expressed as an integral that requires numerical evaluation). However, the main limitation found for these methods is that they rely on upper bounds of the Poisson rates which are model-specific. Upper bounds for PDMC are easily derived in situations where the log-likelihood has a bounded Hessian. In our setting this means that we wish for the function $b^2(x) - b'(x)$ to have bounded second derivative. In other cases, tailor made bounds need to be derived which can be substantially more complicated. Furthermore, the performance of these samplers can be affected if the upper bounds are too large.

In conclusion, this is the first time (to our knowledge) the Zig-Zag  has been employed in a high dimensional practical setting. We claim that the promising results will open research toward applications of the Zig-Zag  for high dimensional problems. We mention below some possible extensions of the methodology proposed which are left for future research:
\begin{enumerate}[label=(\alph*)]
    \item The hierarchical structure of the Faber-Schauder basis suggests that the Zig-Zag  should explore the space at different velocities to achieve optimal performance. Unfortunately, it is not immediately clear how to tune the velocity vector;
    \item In Section~\ref{Subsection: Informal scaling analysis} we anticipated the possibility to simulate multidimensional diffusion bridges. In order to generalize the results presented in this paper, we assumed the drift being a conservative vector field. 
    In order to relax this limiting assumption, new convergence results have to be derived which deal explicitly with the stochastic integral appearing in equation \eqref{girsanov Z_1}.
    \item The driving motivation for proposing this methodology is to perform parameter estimation of a discretely observed diffusion model. For this purpose, the Zig-Zag  sampler runs jointly on the augmented path space given by the coefficients $\xi$ and the parameter space $\Theta$.
\end{enumerate}
\section*{Acknowledgement}
This work is part of the research programme \textit{Bayesian inference for high dimensional processes} with project number 613.009.034c, which is (partly) financed by the Dutch Research Council (NWO) under the  \textit{Stochastics -- Theoretical and Applied Research} (STAR) grant. J.~Bierkens acknowledges support by the NWO for the research project \textit{Zig-zagging through computational barriers} with project number 016.Vidi.189.043. The authors are thankful to Gareth Roberts and Marcin Mider for fruitful discussions and grateful to the reviewers for valuable input.

\appendix
\section{Factorization of the diffusion bridge measure}\label{App: fact diff bridge}
Here we derive rigorously the conditional independence structure of the coefficients which arise from the compact support of the Faber-Schauder functions as shown in Figure~\ref{Fig: support of the fs functions}. Recall that the relation $ \xi_{i,j} \ll \xi_{k,l}$ holds if $S_{k,l} \subset S_{i,j}$ and in that case we refer to $\xi_{i,j}$ as the \emph{ancestor} of $\xi_{k,l}$ (and conversely $\xi_{k,l}$ as the \emph{descendant}). Notice that each coefficient is both descendant and ancestor of itself. 
\begin{prop} 
\label{conditional independece structure}
\begin{sloppypar}
(Conditional independence structure) Denote the set of common ancestors of $\xi_{i,j}$ and $\xi_{k,l}$ by $A_{(i,j; k,l)} := \{ \xi_{h,d} \colon  \xi_{h,d} \ll \xi_{k,l} \wedge  \xi_{h,d} \ll \xi_{i,j} \}$. Under $\mathbb{P}^{v_T}_N$, $\xi_{i,j}$ is conditionally independent from $\xi_{k,l}$, given the set $A_{(i,j;k,l)}$, whenever the interior of the supports of their basis function are disjoint that is neither $\xi_{i,j} \ll \xi_{k,l}$ nor $\xi_{k,l} \ll \xi_{i,j}$ is satisfied. 
\end{sloppypar}
\end{prop}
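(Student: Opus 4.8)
The plan is to exhibit an explicit factorization of the target density and then appeal to the standard criterion that $Y \perp Z \mid W$ whenever the joint density factors as $p(y,z,w)=f(y,w)\,g(z,w)$. Recall from Section~\ref{Sampling diffusion bridges} that under $\PP^{v_T}_N$ we have $\pi(\xi^N)\propto Z^N(X)\exp(-\norm{\xi^N}^2/2)$, and that, since the endpoints $u$ and $v_T$ are fixed, $B(X^N_T)-B(X^N_0)$ is a constant; hence $-\log Z^N(X)$ equals, up to an additive constant, $\tfrac12\int_0^T g(X^N_s)\,\dd s$ with $g=b^2+b'$, where $X^N$ is the truncated Faber--Schauder expansion with the coefficient $\bar\xi$ fixed to the endpoint.

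First I would introduce the \emph{meet} $\xi_{m_0,n_0}$ of $\xi_{i,j}$ and $\xi_{k,l}$, namely the coefficient whose support $S_{m_0,n_0}$ is the smallest Faber--Schauder support containing $S_{i,j}\cup S_{k,l}$; it exists because $S_{0,0}=[0,T]$ and dyadic supports containing a fixed set are totally ordered by inclusion. Under the hypothesis that neither of $S_{i,j},S_{k,l}$ contains the other, $\xi_{m_0,n_0}$ is a strict ancestor of both, so $S_{i,j}$ and $S_{k,l}$ lie in the two distinct dyadic children $S^{\mathrm L}:=S_{m_0+1,2n_0}\supseteq S_{i,j}$ and $S^{\mathrm R}:=S_{m_0+1,2n_0+1}\supseteq S_{k,l}$ of $S_{m_0,n_0}$. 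Since any two Faber--Schauder supports are either nested or have disjoint interiors, the index set of $\xi^N$ partitions into four blocks: the common ancestors $\mathcal A=A_{(i,j;k,l)}$ (a chain from $\xi_{0,0}$ down to $\xi_{m_0,n_0}$, exactly the coefficients whose support contains $S_{m_0,n_0}$), the descendants $\mathcal L$ of $\xi_{m_0+1,2n_0}$ (which contains $\xi_{i,j}$), the descendants $\mathcal R$ of $\xi_{m_0+1,2n_0+1}$ (which contains $\xi_{k,l}$), and the remaining coefficients $\mathcal O$ whose support has empty-interior intersection with $S_{m_0,n_0}$.

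Next I would split $\int_0^T = \int_{S^{\mathrm L}} + \int_{S^{\mathrm R}} + \int_{[0,T]\setminus S_{m_0,n_0}}$ and perform the key bookkeeping step: for $s$ in the interior of $S^{\mathrm L}$, only coefficients $\xi_{h,d}$ with $s\in S_{h,d}$ contribute to $X^N_s$, and by the nested-or-disjoint dichotomy every such index lies in $\mathcal A\cup\mathcal L$; hence $\int_{S^{\mathrm L}}g(X^N_s)\,\dd s$ is a function of $(\xi_{\mathcal A},\xi_{\mathcal L})$ alone, and similarly $\int_{S^{\mathrm R}}$ depends only on $(\xi_{\mathcal A},\xi_{\mathcal R})$ and $\int_{[0,T]\setminus S_{m_0,n_0}}$ only on $(\xi_{\mathcal A},\xi_{\mathcal O})$. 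Combined with $\norm{\xi^N}^2=\norm{\xi_{\mathcal A}}^2+\norm{\xi_{\mathcal L}}^2+\norm{\xi_{\mathcal R}}^2+\norm{\xi_{\mathcal O}}^2$ this yields
\[
\pi(\xi^N)\;\propto\; \pi_{\mathcal A}(\xi_{\mathcal A})\,\pi_{\mathcal L}(\xi_{\mathcal A},\xi_{\mathcal L})\,\pi_{\mathcal R}(\xi_{\mathcal A},\xi_{\mathcal R})\,\pi_{\mathcal O}(\xi_{\mathcal A},\xi_{\mathcal O})
\]
for nonnegative measurable functions $\pi_{\mathcal A},\pi_{\mathcal L},\pi_{\mathcal R},\pi_{\mathcal O}$. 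Under Assumption~\ref{A1}, $g$ is bounded below, so each factor is dominated by a constant times a Gaussian in its non-$\mathcal A$ argument and is thus integrable uniformly in $\xi_{\mathcal A}$; the factorization criterion then gives that, conditionally on $\xi_{\mathcal A}$, the blocks $\xi_{\mathcal L},\xi_{\mathcal R},\xi_{\mathcal O}$ are mutually independent. Since $\xi_{i,j}$ is a coordinate of $\xi_{\mathcal L}$, $\xi_{k,l}$ a coordinate of $\xi_{\mathcal R}$, and $\xi_{\mathcal A}$ is exactly $A_{(i,j;k,l)}$, the claimed conditional independence follows.

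The main obstacle is expected to be purely combinatorial rather than analytic: one must verify carefully, for each of the three pieces of $[0,T]$, that every coefficient entering $X^N$ there falls into the asserted block, i.e. keep disciplined track of the nested/disjoint dichotomy of dyadic Faber--Schauder supports (and check that the measure-zero sets of dyadic endpoints and midpoints are irrelevant to the Lebesgue integrals). An equivalent route, closer to the language of Section~\ref{Sampling diffusion bridges}, is to observe that strict positivity of $\pi$ together with the clique factorization above yields the global Markov property of the interval graph, and that $\mathcal A$ separates $(i,j)$ from $(k,l)$ in that graph once one checks there are no edges among $\mathcal L$, $\mathcal R$, $\mathcal O$ --- which is the same bookkeeping.
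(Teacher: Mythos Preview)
Your proposal is correct and follows essentially the same approach as the paper: both arguments split the time integral in $-\log Z^N(X)$ according to the nested-or-disjoint structure of the Faber--Schauder supports and then read off conditional independence from the resulting product form of the density (together with the independence of the coefficients under $\QQ$). The only organisational difference is that the paper partitions $[0,T]$ at the finer level $k$ into the $2^k$ intervals $S_{k,0},\dots,S_{k,2^k-1}$ and tracks which factor contains each of $\xi_{i,j}$ and $\xi_{k,l}$, whereas you introduce the meet $\xi_{m_0,n_0}$ and use the coarser three-way split $S^{\mathrm L}\cup S^{\mathrm R}\cup([0,T]\setminus S_{m_0,n_0})$ together with the block decomposition $\mathcal A,\mathcal L,\mathcal R,\mathcal O$; both routes yield the same factorization and the same conclusion.
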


\begin{proof}
  For $i= 1,...,N; \, j = 1,...,2^i - 1$, define the vectors of ancestors and descendants of $\xi_{i,j}$ as $\xi^{(i,j)} := \{\xi_{m,n} \colon \xi_{m,n} \ll \xi_{i,j} \vee \xi_{m,n} \gg \xi_{i,j}\}$.
  Assume, without loss of generality, that $i \le k$ and consider two coefficients $\xi_{i,j}, \xi_{k,l}$. We factorize $Z^N(X)$ by partitioning the integration interval $[0,T]$ in a sequence of sub-intervals $S_{k,0}, S_{k,1},...,S_{k,2^k -1}$ so that
\begin{equation}
    \label{factorization}
    Z^N(X) = \prod_{p = 1}^{2^k - 1} f_{k,p}(\xi^{(k,p)}).
\end{equation}
Here \[f_{k,p}(\xi^{(k,p)}) = \exp\left(B( X^N_{\max{S_{k,p}}}) - B( X^N_{\min {S_{k,p}}}) - \frac{1}{2} \int_{S_{k,p}} b^2\left(X^{N;k,p}_s \right) + b'\left(X^{N; k,p}_s \right) \dd s \right).\]
with \[X^{N;k,p}_s = \Bar{\Bar{\phi}}(s) u +  \Bar{\phi}(s)v_T/\sqrt{T} + \sum_{(i,j) \colon \xi_{i,j} \ll \xi_{k,p} } \phi_{i,j}(s) \xi_{i,j}\] and we used that $X^N_s = X^{N;i,j}_s $ when $s \in S_{i,j},\, X^N_T = \Bar{\phi}(T) v_T/\sqrt{T}$ and $X^N_0 = \Bar{\Bar{\phi}}(0) u $. Now just notice that, under this factorization, the only factor which is a function of $\xi_{k,l}$ is $f_{k,l}(\xi^{(k,l)})$. Here, if $\xi_{i,j} \not\ll \xi_{k,l}$ then $\xi^{(k,l)}$ does not contain $\xi_{i,j}$. Conversely, the factors containing $\xi_{i,j}$ are those $f_{k,p}(\xi^{(k,p)})$ such that $ \xi_{i,j} \ll \xi_{k,p} $ with $p = 0,1,...,2^k-1$. If $\xi_{i,j} \not\ll \xi_{k,l}$, none of the vectors $\xi^{(k,p)}$ contains $\xi_{k,l}$. Since, under the measure $\mathbb{Q}^{u,v_T}$, the random variables in the vector $\xi^N$ are pairwise independent, the factorization on $Z^N(X)$ defines the dependency structure of the vector $\xi^N$ under $\mathbb{P}^{v_T}_N$ so that $\xi_{i,j}$ and  $\xi_{k,l}$ are independent conditionally on their common coefficients given by the set $ A_{(i,j;k,l)}$.
\end{proof}

More intuitively, the factorization of $Z(X)$ gives rise to the dependency graph displayed in Figure~\ref{depepndence graph} which shows that the coefficients in high \textit{levels} ($i$ large) are coupled with just few other coefficients and conditionally independent from all the remaining. 
The conditional independence of the coefficients implies that the partial derivatives of the energy function (and consequently the Poisson rates given by equation \eqref{Poisson rates}) are functions of only few coefficients in the sense of Assumption~\ref{Assumption dependecy structure}. In particular the sets in Assumption~\ref{Assumption dependecy structure} (using double indexing) can be chosen as $N_{i,j} = \{ \xi_{h,d} \colon \xi_{h,d} \ll \xi_{i,j} \vee  \xi_{h,d} \gg \xi_{i,j}\}$ with size $|N_{i,j}|= 2^{N-i + 1} + i -1 $, where $N$ is the truncation level.
\tikzset{main node/.style={circle,fill=blue!10, draw, minimum size=0.7cm,inner sep=0pt},
            }
            
\begin{figure}
    \centering

\begin{tikzpicture}

\node[main node] (1) {$\xi_{0,0}$};
\node[above,font=\large\bfseries] (0) [above = 1.5cm of 1] {Dependency structure};
\node[main node] (2) [left = 1.3cm of 1]  {$\xi_{1,0}$};
\node[main node] (3) [right = 1.3cm of 1] {$\xi_{1,1}$};
\node[main node] (4) [above left = 1.0cm and 0.5cm of 2] {$\xi_{2,0}$};
\node[main node] (5) [below left = 1.0cm and 0.5cm of 2] {$\xi_{2,1}$};
\node[main node] (8) [left = 1.5cm of 4] {$\xi_{3,0}$};
\node[main node] (9) [below left = 0.2cm and 1.0cm of 4] {$\xi_{3,1}$};
\node[main node] (10) [above left = 0.2cm and 1.0cm of 5] {$\xi_{3,2}$};
\node[main node] (11) [left = 1.5cm of 5] {$\xi_{3,3}$};

\node[main node] (6) [above right = 1.0cm and 0.5cm of 3] {$\xi_{2,2}$};
\node[main node] (7) [below right = 1.0cm and 0.5cm of 3] {$\xi_{2,3}$};
\node[main node] (12) [right = 1.5cm of 6] {$\xi_{3,4}$};
\node[main node] (13) [below right = 0.2cm and 1.0cm of 6] {$\xi_{3,5}$};
\node[main node] (14) [above right = 0.2cm and 1.0cm of 7] {$\xi_{3,6}$};
\node[main node] (15) [right = 1.5cm of 7] {$\xi_{3,7}$};

\path[draw]
(1) edge node {} (2)
(1) edge node {} (3)
(2) edge node {} (4)
(2) edge node {} (5)
(1) edge node {} (4)
(1) edge node {} (5)
(1) edge node {} (8)
(1) edge node {} (9)
(1) edge node {} (10)
(1) edge node {} (11)
(4) edge node {} (8)
(4) edge node {} (9)
(5) edge node {} (10)
(5) edge node {} (11)
(2) edge node {} (8)
(2) edge node {} (9)
(2) edge node {} (10)
(2) edge node {} (11)

(3) edge node {} (6)
(3) edge node {} (7)
(1) edge node {} (6)
(1) edge node {} (7)
(1) edge node {} (12)
(1) edge node {} (13)
(1) edge node {} (14)
(1) edge node {} (15)
(6) edge node {} (12)
(6) edge node {} (13)
(7) edge node {} (14)
(7) edge node {} (15)
(3) edge node {} (12)
(3) edge node {} (13)
(3) edge node {} (14)
(3) edge node {} (15)
;

\end{tikzpicture}
    \caption{ Graphical representation of the dependency structure of the random vector of the coefficients under $\mathbb{P}^{u, v_T}_N$.  $\xi_{i,j} \independent \xi_{k,l}$  conditionally on the vertices which have a direct hedge to both $\xi_{i,j}$ and $\xi_{k,l}$ if  $\xi_{i,j}$ does not have a direct edge to $\xi_{k,l}$. The dependency graph is a \textit{chordal graph}.}
    \label{depepndence graph}
\end{figure}
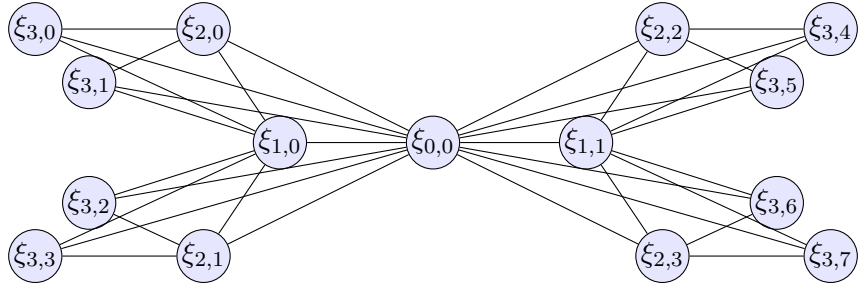
  \printbibliography 

@article{hoffman2014no,
  title={The No-U-Turn sampler: adaptively setting path lengths in Hamiltonian Monte Carlo.},
  author={Hoffman, Matthew D and Gelman, Andrew},
  journal={J. Mach. Learn. Res.},
  volume={15},
  number={1},
  pages={1593--1623},
  year={2014}
}

@misc{betancourt2018conceptual,
      title={A Conceptual Introduction to Hamiltonian Monte Carlo}, 
      author={Michael Betancourt},
      year={2018},
      eprint={1701.02434},
      archivePrefix={arXiv},
      primaryClass={stat.ME}
}

@article{Monmarch__2020,
   title={Velocity jump processes: An alternative to multi-timestep methods for faster and accurate molecular dynamics simulations},
   volume={153},
   ISSN={1089-7690},
   url={http://dx.doi.org/10.1063/5.0005060},
   DOI={10.1063/5.0005060},
   number={2},
   journal={The Journal of Chemical Physics},
   publisher={AIP Publishing},
   author={Monmarché, Pierre and Weisman, Jérémy and Lagardère, Louis and Piquemal, Jean-Philip},
   year={2020},
   month={7},
   pages={024101}
}

@article{Michel_2019,
   title={Clock Monte Carlo methods},
   volume={99},
   ISSN={2470-0053},
   url={http://dx.doi.org/10.1103/PhysRevE.99.010105},
   DOI={10.1103/physreve.99.010105},
   number={1},
   journal={Physical Review E},
   publisher={American Physical Society (APS)},
   author={Michel, Manon and Tan, Xiaojun and Deng, Youjin},
   year={2019},
   month=1
}

@article{Faulkner_2018,
   title={All-atom computations with irreversible {M}arkov chains},
   volume={149},
   ISSN={1089-7690},
   url={http://dx.doi.org/10.1063/1.5036638},
   DOI={10.1063/1.5036638},
   number={6},
   journal={The Journal of Chemical Physics},
   publisher={AIP Publishing},
   author={Faulkner, Michael F. and Qin, Liang and Maggs, A. C. and Krauth, Werner},
   year={2018},
   month={8},
   pages={064113}
}

@article{Peters_2012,
   title={Rejection-free Monte Carlo sampling for general potentials},
   volume={85},
   ISSN={1550-2376},
   url={http://dx.doi.org/10.1103/PhysRevE.85.026703},
   DOI={10.1103/physreve.85.026703},
   number={2},
   journal={Physical Review E},
   publisher={American Physical Society (APS)},
   author={Peters, E. A. J. F. and de With, G.},
   year={2012},
   month={2}
}

@article{roberts1998optimal,
  title={Optimal scaling of discrete approximations to Langevin diffusions},
  author={Roberts, Gareth O and Rosenthal, Jeffrey S},
  journal={Journal of the Royal Statistical Society: Series B (Statistical Methodology)},
  volume={60},
  number={1},
  pages={255--268},
  year={1998},
  publisher={Wiley Online Library}
}

@misc{mschauer/ZigZagBoomerang.jl,
  doi = {10.5281/zenodo.3931118},
  note={\url{https://www.github.com/mschauer/ZigZagBoomerang.jl}},
  author = {Schauer,  Moritz and Grazzi,  Sebastiano},
  title = {{ZigZagBoomerang}: v0.5.3},
  publisher = {Zenodo},
  year = {2020},
  copyright = {Open Access}
}

@article{10.2307/3318418,
 ISSN = {13507265},
 URL = {http://www.jstor.org/stable/3318418},
 abstract = {In this paper we consider a continuous-time method of approximating a given distribution π using the Langevin diffusion d Lt= d Wt+1/2∇ log π ( Lt) dt. We find conditions under this diffusion converges exponentially quickly to π or does not: in one dimension, these are essentially that for distributions with exponential tails of the form π(x) ∝ exp(-γ |x|β), 0 < β < ∞, exponential convergence occurs if and only if β ≥ 1. We then consider conditions under which the discrete approximations to the diffusion converge. We first show that even when the diffusion itself converges, naive discretizations need not do so. We then consider a 'Metropolis-adjusted' version of the algorithm, and find conditions under which this also converges at an exponential rate: perhaps surprisingly, even the Metropolized version need not converge exponentially fast even if the diffusion does. We briefly discuss a truncated form of the algorithm which, in practice, should avoid the difficulties of the other forms.},
 author = {Gareth O Roberts and Richard L Tweedie},
 journal = {Bernoulli},
 number = {4},
 pages = {341--363},
 publisher = {International Statistical Institute (ISI) and Bernoulli Society for Mathematical Statistics and Probability},
 title = {Exponential Convergence of Langevin Distributions and Their Discrete Approximations},
 volume = {2},
 year = {1996}
}

@misc{bierkens2020boomerang,
    title={The Boomerang Sampler},
    author={Joris Bierkens and Sebastiano Grazzi and Kengo Kamatani and Gareth Roberts},
    year={2020},
    eprint={2006.13777},
    archivePrefix={arXiv},
}

@misc{bierkens2018highdimensional,
    title={High-dimensional scaling limits of piecewise deterministic sampling algorithms},
    author={Joris Bierkens and Kengo Kamatani and Gareth O. Roberts},
    year={2018},
    eprint={1807.11358},
    archivePrefix={arXiv},
}

@misc{mider2017continuousdiscrete,
    title={Continuous-discrete smoothing of diffusions},
    author={Marcin Mider and Moritz Schauer and Frank van der Meulen},
    year={2020},
    eprint={1712.03807},
    archivePrefix={arXiv},
}

@book{liptser2001statistics,
  title={Statistics of Random Processes: I. General Theory},
  author={Liptser, R.S. and Aries, B. and Shiryaev, A.N.},
  isbn={9783662130438},
  lccn={00041918},
  series={Stochastic Modelling and Applied Probability},
  year={2013},
  publisher={Springer Berlin Heidelberg}
}

@article{van2018adaptive,
  title={Adaptive nonparametric drift estimation for diffusion processes using {F}aber--{S}chauder expansions},
  author={van der Meulen, Frank and Schauer, Moritz and van Waaij, Jan},
  journal={Statistical Inference for Stochastic Processes},
  volume={21},
  number={3},
  pages={603--628},
  year={2018},
  publisher={Springer}
}

@article{bladt2014simple,
  title={Simple simulation of diffusion bridges with application to likelihood inference for diffusions},
  author={Bladt, Mogens and S{\o}rensen, Michael and others},
  journal={Bernoulli},
  volume={20},
  number={2},
  pages={645--675},
  year={2014},
  publisher={Bernoulli Society for Mathematical Statistics and Probability}
}

@book{grimmett2001probability,
title={Probability and random processes}, 
publisher={Oxford University Press},
author={Grimmett, Geoffrey and Stirzaker, David},
year={2001} 
}

@misc{andrieu2018hypocoercivity,
    title={Hypocoercivity of Piecewise Deterministic {M}arkov Process-{M}onte {C}arlo},
    author={Christophe Andrieu and Alain Durmus and Nikolas Nüsken and Julien Roussel},
    year={2018},
    eprint={1808.08592},
    archivePrefix={arXiv},
}

@article{vandermeulen2017,
author = "van der Meulen, Frank and Schauer, Moritz",
doi = "10.1214/17-EJS1290",
fjournal = "Electronic Journal of Statistics",
journal = "Electron. J. Statist.",
number = "1",
pages = "2358--2396",
publisher = "The Institute of Mathematical Statistics and the Bernoulli Society",
title = "Bayesian estimation of discretely observed multi-dimensional diffusion processes using guided proposals",
url = "https://doi.org/10.1214/17-EJS1290",
volume = "11",
year = "2017"
}

@article{diaconis2000analysis,
  title={Analysis of a nonreversible {M}arkov chain sampler},
  author={Diaconis, Persi and Holmes, Susan and Neal, Radford M},
  journal={Annals of Applied Probability},
  pages={726--752},
  year={2000},
  publisher={JSTOR}
}

@misc{andrieu2019peskuntierney,
    title={Peskun-{T}ierney ordering for {M}arkov chain and process {M}onte {C}arlo: beyond the reversible scenario},
    author={Christophe Andrieu and Samuel Livingstone},
    year={2019},
    eprint={1906.06197},
    archivePrefix={arXiv},
}

@book{mckean1969stochastic,
  title={Stochastic integrals},
  author={McKean, Henry P},
  volume={353},
  year={1969},
  publisher={American Mathematical Soc.}
}

@article{fearnhead2018,
author = "Fearnhead, Paul and Bierkens, Joris and Pollock, Murray and Roberts, Gareth O.",
doi = "10.1214/18-STS648",
fjournal = "Statistical Science",
journal = "Statist. Sci.",
month = "08",
number = "3",
pages = "386--412",
publisher = "The Institute of Mathematical Statistics",
title = "{Piecewise Deterministic {M}arkov Processes for Continuous-Time {M}onte {C}arlo}",
url = "https://doi.org/10.1214/18-STS648",
volume = "33",
year = "2018"
}

@article{beskos2006retrospective,
  title={Retrospective exact simulation of diffusion sample paths with applications},
  author={Beskos, Alexandros and Papaspiliopoulos, Omiros and Roberts, Gareth O and others},
  journal={Bernoulli},
  volume={12},
  number={6},
  pages={1077--1098},
  year={2006},
  publisher={Bernoulli Society for Mathematical Statistics and Probability}
}

@misc{mider2019simulating,
  title={Simulating bridges using confluent diffusions},
  author={Mider, Marcin and Jenkins, Paul A and Pollock, Murray and Roberts, Gareth O and S{\o}rensen, Michael},
  eprint={1903.10184},
  archivePrefix={arXiv},
  year={2019}
}

@article{bierkens2018simulation,
title={Simulation of elliptic and hypo-elliptic conditional diffusions},
volume={52},
DOI={10.1017/apr.2019.54},
number={1},
journal={Advances in Applied Probability},
publisher={Cambridge University Press},
author={Bierkens, Joris and van der Meulen, Frank and Schauer, Moritz},
year={2020},
pages={173--212}
}

@book{klebaner2005introduction,
  title={Introduction to stochastic calculus with applications},
  author={Klebaner, Fima C},
  year={2005},
  publisher={World Scientific Publishing Company}
}

@inproceedings{ge2018t,
  author    = {Hong Ge and
               Kai Xu and
               Zoubin Ghahramani},
  title     = {Turing: a language for flexible probabilistic inference},
  booktitle = {International Conference on Artificial Intelligence and Statistics,
               {AISTATS} 2018, 9-11 April 2018, Playa Blanca, Lanzarote, Canary Islands,
               Spain},
  pages     = {1682--1690},
  year      = {2018},
  url       = {http://proceedings.mlr.press/v84/ge18b.html},
  biburl    = {https://dblp.org/rec/bib/conf/aistats/GeXG18},
}

@article{DUANE1987216,
title = "Hybrid Monte Carlo",
journal = "Physics Letters B",
volume = "195",
number = "2",
pages = "216 - 222",
year = "1987",
issn = "0370-2693",
doi = "10.1016/0370-2693(87)91197-X",
url = "http://www.sciencedirect.com/science/article/pii/037026938791197X",
author = "Simon Duane and A.D. Kennedy and Brian J. Pendleton and Duncan Roweth",
abstract = "We present a new method for the numerical simulation of lattice field theory. A hybrid (molecular dynamics/Langevin) algorithm is used to guide a Monte Carlo simulation. There are no discretization errors even for large step sizes. The method is especially efficient for systems such as quantum chromodynamics which contain fermionic degrees of freedom. Detailed results are presented for four-dimensional compact quantum electrodynamics including the dynamical effects of electrons."
}

@article{karatzas1998brownian,
  title={Brownian motion and stochastic calculus},
  author={Karatzas, I and Shreve, Steven E},
  journal={Graduate texts in Mathematics},
  volume={113},
  year={1991}
}

@article{roberts2001inference,
  title={On inference for partially observed nonlinear diffusion models using the {M}etropolis--{H}astings algorithm},
  author={Roberts, Gareth O and Stramer, Osnat},
  journal={Biometrika},
  volume={88},
  number={3},
  pages={603--621},
  year={2001},
  publisher={Oxford University Press}
}

@article{bierkens2019,
author = "Bierkens, Joris and Fearnhead, Paul and Roberts, Gareth",
doi = "10.1214/18-AOS1715",
fjournal = "The Annals of Statistics",
journal = "Ann. Statist.",
month = "06",
number = "3",
pages = "1288--1320",
publisher = "The Institute of Mathematical Statistics",
title = "The {Z}ig-{Z}ag process and super-efficient sampling for {B}ayesian analysis of big data",
url = "https://doi.org/10.1214/18-AOS1715",
volume = "47",
year = "2019"
}

@book{davis1993markov,
  title={Markov Models \& Optimization},
  author={Davis, M.H.A.},
  isbn={9780412314100},
  lccn={lc92039557},
  series={Chapman \& Hall/CRC Monographs on Statistics \& Applied Probability},
  year={1993},
  publisher={Taylor \& Francis}
}

@ARTICLE{2015arXiv151002451B,
       author = {{Bouchard-C{\^o}t{\'e}}, Alexandre and {Vollmer}, Sebastian J. and
         {Doucet}, Arnaud},
        title = "{The Bouncy Particle Sampler: A Non-Reversible Rejection-Free Markov Chain {M}onte {C}arlo Method}",
     keywords = {Statistics - Methodology, Mathematics - Statistics Theory},
         year = "2015",
        month = 10,
archivePrefix = {arXiv},
       eprint = {1510.02451},
       adsurl = {https://ui.adsabs.harvard.edu/\#abs/2015arXiv151002451B},
      adsnote = {Provided by the SAO/NASA Astrophysics Data System}
}
\end{document}